\newtheorem{theorem}{Theorem}[section]
\newtheorem{proposition}[theorem]{Proposition}
\theoremstyle{definition}
\newtheorem{definition}[theorem]{Definition}
\newtheorem{example}[theorem]{Example}
\theoremstyle{remark}
\newcommand{\lleft}{\left}
\newcommand{\rright}{\right}
\def\rA{{\rm A}}
\def\rB{{\rm B}}
\def\rL{{\rm L}}
\def\ttM{{\tt M}}
\def\re{{\rm e}}
\def\bt{\mathbf t}
\def\bR{\mathbf R}
\def\bE{\mathbf E}
\def\bp{\mathbf p}
\def\B1{\mathbf1}
\def\bu{\mathbf u}
\def\utheta{\underline\theta}
\def\cX{{\mathcal X}}
\def\fX{{\mathfrak X}}
\def\tM{\tilde M}
\def\BI{\mathbf I}
\def\rw{{\rm w}} \def\rNo{{\rm{No}}}
\def\BC{{\mathbf C}} \def\lam{\lambda}
\def\BD{{\mathbf D}}
\def\BX{{\mathbf X}}
\def\BY{{\mathbf Y}}
\def\tJ{\mathbf{J}}
\def\gam{\gamma}
\def\fX{\mathfrak X}
\def\cX{\mathcal X}
\def\bu{\mathbf u}
\def\bbP{\mathbb P} \def\hw{h^{\rm w}}
\def\BX{\mathbf{X}}\def\BY{\mathbf{Y}}
\def\BZ{\mathbf{Z}} \def\BN{\mathbf{N}}
\def\tM{\tilde M}
\def\bE{\mathbb{E}}
\def\bbE{{\mathbb E}} \def\bu{\mathbf u}
\def\BC{{\mathbf C}} \def\lam{\lambda}
\def\bbR{{\mathbb R}}\def\bbS{{\mathbb S}}
\def\bPhi{{\mbox{\boldmath${\varPhi}$}}}
\def\bbZ{{\mathbb Z}} \def\bt{\mathbf t}\def\B1{\mathbf1}
\def\bV{\mathbf V}
\def\mR{\cal R}
\def\bS{\mathbf S}
\def\bE{\mathbb{E}}
\def\bbE{{\mathbb E}} \def\bu{\mathbf u}
\def\BC{{\mathbf C}} \def\lam{\lambda}
\def\bbR{{\mathbb R}}\def\bbS{{\mathbb S}}
\def\bPhi{{\mbox{\boldmath${\varPhi}$}}}
\def\bbZ{{\mathbb Z}} \def\bt{\mathbf t}\def\B1{\mathbf1}
\def\rw{{\rm w}}
\def\rT{{\rm T}}
\def\tI{{\tt I}} \def\tJ{{\tt J}}
 \def\tP{{\tt P}}
\def\fX{\mathfrak X}
\def\cX{\mathcal X}
\def\bu{\mathbf u}
\def\bbP{\mathbb P} \def\hw{{h^{\rm w}}}
\def\BX{\mathbf{X}}
\def\bE{\mathbb{E}}
\def\bbE{{\mathbb E}}
\def\fX{\mathfrak X}
\def\cX{\mathcal X}
\def\bu{\mathbf u}
\def\bbP{\mathbb P} \def\hw{h^{\rm w}}
\def\rd{{\rm d}}
\def\BX{\mathbf{X}}\def\BY{\mathbf{Y}}
\def\bE{\mathbb{E}}
\def\bbE{{\mathbb E}} \def\bu{\mathbf u}
\def\BC{{\mathbf C}} \def\lam{\lambda}
\def\bbR{{\mathbb R}}\def\bbS{{\mathbb S}}
\def\bPhi{{\mbox{\boldmath${\varPhi}$}}}
\def\bbZ{{\mathbb Z}} \def\bt{\mathbf t}\def\B1{\mathbf1}
\def\bbS{\mathbb{S}}
\def\bu{\mathbf{u}}
\def\vp{\varphi}
\def\hwphi{h^{\rm w}_{\phi}}
\def\utheta{{\underline\theta}}
\let\phi\vp
\def\B1{\mathbf1}
\def\BSigma{\mathbf\varSigma}
\numberwithin{equation}{section}
\newcommand{\rrvert}{\vert}
\newcommand{\llvert}{\vert}
\begin{document}
\begin{frontmatter}

\title{Weighted entropy: basic inequalities}

\author[a]{\inits{M.}\fnm{Mark}\snm{Kelbert}\corref{cor1}}\email{mkelbert@hse.ru}
\cortext[cor1]{Corresponding author.}
\address[a]{Higher School of Economics, Moscow, RF}

\author[b]{\inits{I.}\fnm{Izabella}\snm{Stuhl}}\email{stuhlizabella@googlemail.com}
\address[b]{Math Dept, Penn State University, PA, USA}

\author[c,d]{\inits{Y.}\fnm{Yuri}\snm{Suhov}}\email{yms@statslab.cam.ac.uk}
\address[c]{Cambridge University, UK}
\address[d]{Penn State University, PA, USA}

\markboth{M. Kelbert et al.}{Weighted entropy: basic inequalities}

\begin{abstract}
This paper represents an extended version of an earlier note [10]. The
concept of weighted entropy takes into account values of different
outcomes, i.e., makes entropy context-dependent, through the weight
function. We analyse analogs of the Fisher information inequality and
entropy power inequality for the weighted entropy and discuss connections
with weighted Lieb's splitting inequality. The concepts of rates of the
weighted entropy and information are also discussed.
\end{abstract}

\begin{keywords}
\kwd{Weighted entropy}
\kwd{Gibbs inequality}
\kwd{Ky-Fan inequality}
\kwd{Fisher information inequality}
\kwd{entropy power inequality}
\kwd{Lieb's splitting inequality}
\kwd{rates of weighted entropy and information}
\end{keywords}
\begin{keywords}[2010]
\kwd{94A17}
\end{keywords}

\received{30 August 2017}
\revised{18 September 2017}
\accepted{18 September 2017}
\publishedonline{2 October 2017}
\end{frontmatter}

\section{Introduction}\label{Intro}
This paper represents an extended version of an earlier note
[10].\footnote{AMCTA-2017, Talks at Conference ``Analytical and
Computational Methods
in Probability Theory and its Applications''}
We also follow earlier publications discussing related topics: \cite
{SSSK,SSK,SS1,SS}.
The Shannon entropy (SE) of a probability distribution $\bp$ or the
Shannon differential entropy (SDE) of a probability density function
(PDF) $f$
\begin{equation}
h(\bp)=-\sum_i p(x_i)\log p(x_i),\qquad
h (f)=-\int f(x)\log f(x)\rd \xch{x}{x,} \label{1}
\end{equation}
is context-free, i.e., does not depend on the nature of outcomes $x_i$
or $x$,
but only upon probabilities $p(x_i)$ or values $f(x)$. It gives the
notion of entropy a great flexibility which
explains its successful applications.
However, in many situations it seems insufficient, and the context-free property
appears as a drawback. Viz., suppose you learn a news about severe
weather conditions in an area
far away from your place. Such conditions usually do not happen; an
event like this has a small probability $p\ll1$ and
conveys a high information $-\log p$. At the same time you hear that a
tree near your parking lot in the town has fallen
and damaged a number of cars. The probability of this event is also
low, so the amount of information is again high. However, the
value of this information for you is higher than in the first event.
Considerations of this character can motivate a study of weighted
information and entropy, making them context-dependent.

\begin{definition} Let us define
the {\it weighted entropy} (WE) as
\begin{equation}
h^\rw_{\phi}(\bp)=-\sum_i
\phi(x_i)p(x_i)\log p(x_i). \label{2}
\end{equation}
Here a non-negative {\it weight function} (WF) $x_i\mapsto\phi(x_i)$
is introduced, representing a value/utility of
an outcomes $x_i$. A similar approach
can be used for the differential entropy of a probability density
function (PDF) $f$. Define the
{\it weighted differential entropy} (WDE) as
\begin{equation}
h^\rw_{\phi}(f)=-\int\phi(\mathbf x)f(\mathbf x)\log f(
\mathbf x) {\rd}\mathbf x. \label{3}
\end{equation}

An initial example of a WF $\phi$ may be
$\phi(\mathbf x)={\bf1}$ $(\mathbf x\in A)$ where $A$ is a particular
subset of
outcomes (an event).
A heuristic use of the WE with such a WF was demonstrated in \cite{FS1,FS2}.
Another example repeatedly used below is
$f(\mathbf x)=f^{\rNo}_C(\mathbf x)$, a $d$-dimensional Gaussian
PDF with mean $\mathbf0$ and covariance matrix $C$. Here
\begin{align}
h^\rw_{\phi}\bigl(f^{\rNo}_C\bigr)&=\frac{\alpha_{\phi}(C)}{2}\log \bigl[(2\pi )^d{\rm det}(C) \bigr]+
\frac{\log e}{2}{\rm tr} \bigl[C^{-1}\varPhi _{C,\phi
} \bigr]\quad \hbox{where}\nonumber
\\
\alpha_{\phi}(C)&=\int_{\bbR^d}\phi(\mathbf
x)f_C^{\rNo
}(\mathbf x){\rd}\mathbf x,\quad
\varPhi_{C,\phi}=\int_{\bbR^d}\mathbf x\mathbf
x^{\rm T} \phi(\mathbf x)f_C^{\rNo}(\mathbf x){\rd}
\mathbf x.\label{eq:WDEGa}
\end{align}
For $\phi(\mathbf x)=1$ we get the normal SDE $h(f^\rNo_C)=\frac
{1}{2}\log
 [(2\pi\re)^d{\rm{det}}\,C ]$.
\end{definition}

In this note we give a brief introduction into the concept of the
weighted entropy.
We do not always give proofs, referring the reader to the quoted
original papers. Some basic properties of WE and WDE have
been presented in \cite{SSSK}; see also references therein
to early works on the subject.
Applications of the WE and WDE to the security quantification of
information systems
are discussed in \cite{PMPC}.
Other domains range from the stock market to the image processing, see,
e.g., \cite{G,KSS,LKP,NH,T,YYPL}.

Throughout this note we assume that the series
and integrals in \eqref{2}--\eqref{3} and the subsequent equations
converge absolutely, without stressing it every time again. To unify
the presentation, we
will often use integrals $\int_\cX\rd\mu$ relative to a reference
$\sigma$-finite measure $\mu$ on a
Polish space $\cX$ with a Borel $\sigma$-algebra $\fX$. In this regard,
the acronym PM/DF (probability
mass/density function) will be employed. Usual measurability
assumptions will also be
in place for the rest of the presentation. We also assume that the WF
$\phi>0$ on an open set in $\cX$.

In some parts of the presentation, the sums and integrals comprising a
PM/DF will be written as expectations:
this will make it easier to explain/use assumptions and properties
involved. Viz., Eqns \eqref{2}--\eqref{3}
can be given as $h^\rw_\phi(\bp)=-\bbE\phi(\mathbf X)\log\bp
(\mathbf X)$ and
$h^\rw_\phi(f)=-\bbE\phi(\mathbf X)\log f(\mathbf X)$
where $\mathbf X$ is a random variable (RV) with the PM/DF $\bp$ or $f$.
Similarly, in \eqref{eq:WDEGa}, $\alpha_\phi
(C)=\bbE\phi(\mathbf X)$ and $\bPhi_{C,\phi}=\bbE\phi(\mathbf
X)\mathbf X\mathbf X^\rT$
where $\mathbf X\sim{}$N$(\mathbf0,C)$.

\section{The weighted Gibbs inequality}\label{Gibbs}
Given two non-negative functions $f,g$ (typically, PM/DFs), define the
{\it weighted Kull\-back-Leibler
divergence} (or the relative WE, briefly RWE) as
\begin{equation}
D^\rw_{\phi}(f\|g)=\int_\cX\phi(\mathbf
x)f(\mathbf x)\log\frac
{f(\mathbf x)}{g(\mathbf x)}{\rd }\mu(\mathbf x). \label{5}
\end{equation}
Theorem~1.3 from \cite{SSSK} states:
\begin{theorem}
Suppose that
\begin{equation}
\int_\cX\phi(\mathbf x) \bigl[f(\mathbf x)-g(\mathbf x)
\bigr]{\rm d}\mu (\mathbf x)\geq0. \label{6}
\end{equation}
Then $D^\rw_{\phi}(f\|g)\geq0$. Moreover, $D^\rw_{\phi}(f\|g)= 0$ iff
$\phi(\mathbf x) [\frac{g(\mathbf x)}{f(\mathbf x)}-1
]=0$ $f$-a.e.
\end{theorem}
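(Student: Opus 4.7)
The plan is to reduce the weighted Gibbs inequality to a pointwise elementary inequality plus the hypothesis \eqref{6}, exactly as in the classical (unweighted) Gibbs proof. The engine is the elementary bound $\log t \le t-1$ for $t>0$, with equality iff $t=1$, applied to the likelihood ratio $t=g/f$.

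First I would restrict attention to $A=\{\mathbf{x}\in\cX:f(\mathbf{x})>0\}$, adopting the standard conventions $0\log(0/\cdot)=0$ and $\phi f\log(f/0)=+\infty$ when $\phi f>0$, so that the integrand in $\Dwphi(f\|g)$ over $A^{c}$ contributes nothing. On $A\cap\{g>0\}$, substituting $t=g/f$ in $\log t\le t-1$ and rearranging gives $\log(f/g)\ge 1-g/f$; multiplying by the non-negative quantity $\phi(\mathbf{x})f(\mathbf{x})$ yields the pointwise estimate
\begin{equation*}
\phi(\mathbf{x})f(\mathbf{x})\log\frac{f(\mathbf{x})}{g(\mathbf{x})}\ \ge\ \phi(\mathbf{x})\bigl[f(\mathbf{x})-g(\mathbf{x})\bigr],
\end{equation*}
which also holds trivially on $A\cap\{g=0\}$ (left side $=+\infty$) and on $A^{c}$ (both sides $\le 0$ after accounting for the convention, handled by integrating separately on $A$).

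Integrating over $A$ against $\mu$ and splitting $\int_{A}\phi(f-g)\,d\mu=\int_{\cX}\phi(f-g)\,d\mu+\int_{A^{c}}\phi g\,d\mu$, the first piece is $\ge 0$ by assumption \eqref{6}, the second is $\ge 0$ since $\phi,g\ge 0$, and the left side is $\Dwphi(f\|g)$. This proves $\Dwphi(f\|g)\ge 0$. For the equality characterization, I would observe that equality in $\log t\le t-1$ holds iff $t=1$, so the pointwise bound on $A$ is tight at $\mathbf{x}$ exactly when $\phi(\mathbf{x})[g(\mathbf{x})/f(\mathbf{x})-1]=0$. If this holds $f$-a.e., then $\phi f\log(f/g)$ vanishes $\mu$-a.e.\ on $A$ (either $\phi=0$, or $g=f>0$, in which case $\log(f/g)=0$), and also $A\cap\{g=0\}$ has $\phi f$-measure zero, so $\Dwphi(f\|g)=0$. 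Conversely, if $\Dwphi(f\|g)=0$, the chain of inequalities must be equalities, forcing $\phi[g/f-1]=0$ $f$-a.e.

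The main technical nuisance will be the bookkeeping around the sets $\{f=0\}$ and $\{g=0\}$: one has to justify the conventions on $0\log 0$ and $\log 0$, and in particular observe that if $\phi f>0$ on a set of positive $\mu$-measure inside $\{g=0\}$ then $\Dwphi(f\|g)=+\infty$ and the strict inequality is automatic, separating this from the "finite" part of the argument. The rest is a direct, essentially calculation-free reduction to the scalar inequality $\log t\le t-1$.
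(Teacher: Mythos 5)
Your proof is correct, and it is the standard argument: the paper itself gives no proof of this theorem (it is quoted as Theorem~1.3 of \cite{SSSK}), and the proof there is exactly your reduction to $\log t\le t-1$ applied to $t=g/f$, multiplied by $\phi f\ge 0$, integrated, and combined with hypothesis \eqref{6}, with the equality case read off from strictness of $\log t<t-1$ for $t\ne 1$. The only point worth double-checking in your write-up is the bookkeeping identity $\int_{A}\phi(f-g)\,{\rm d}\mu=\int_{\cX}\phi(f-g)\,{\rm d}\mu+\int_{A^{c}}\phi g\,{\rm d}\mu$, which is indeed valid since $f=0$ on $A^{c}$, so both pieces are nonnegative as you claim.
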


\begin{example} For an exponential family in the canonical form
\begin{equation}
f_{\utheta}(\mathbf x)=h(\mathbf x)\exp{ \bigl( \bigl\langle\utheta, T(
\mathbf x) \bigr\rangle -A(\utheta) \bigr)},\quad
\mathbf x\in\bbR^d,~\utheta \in\bbR^m, \label{5a}
\end{equation}
with the sufficient statistics $T(\mathbf x)$ we have
\begin{equation}
D^\rw_{\phi}(f_{\utheta_1}\|f_{\utheta_2})=e^{A_{\phi}(\utheta
_1)-A(\utheta_1)}
\xch{\bigl(A(\utheta_2)-A(\utheta_1)- \bigl\langle\nabla A_{\phi}(\utheta_1),\utheta_2-\utheta_1 \bigr\rangle \bigr),}%
{\bigl(A(\utheta_2)-A(\utheta_1)- \bigl\langle\nabla A_{\phi}(\utheta_1),\utheta_2-\utheta_1 \bigr\rangle \bigr)}
\label{5b}
\end{equation}
where $\nabla$ stands for the gradient w.r.t. to the parameter vector
$\utheta$, and
\begin{equation}
A_{\phi}(\utheta)=\log\int\phi(\mathbf x)h(\mathbf x)\exp \bigl( \bigl
\langle \utheta,T(\mathbf x ) \bigr\rangle\bigl)\rd\mathbf x. \label{5c}
\end{equation}
\end{example}

\section{Concavity/convexity of the weighted entropy}\label{Conc}

Theorems~2.1 and 2.2 from \cite{SSSK} offer the following assertion:
\begin{theorem}
\textup{(a)} The WE/WDE functional $f\mapsto h^\rw_{\phi}(f)$ is
concave in argument $f$. Namely,
for any PM/DFs
$f_1(x)$, $f_2(x)$ and $\lambda_1, \lambda_2\in[0,1]$ such that
$\lambda
_1+\lambda_2=1$,
\begin{equation}
h^\rw_{\phi}(\lambda_1f_1+
\lambda_2f_2)\geq\lambda_1h^\rw_{\phi
}(f_1)+
\lambda_2h^\rw_{\phi}(f_2).
\label{71}
\end{equation}
The equality iff $\phi(x)[f_1(x)-f_2(x)]=0$ holds for $(\lambda
_1f_1+\lambda
_2f_2)$-a.a. $x$.

\textup{(b)} However, the RWE functional $(f,g)\mapsto D^\rw_{\phi
}(f\|g)$ is convex: given two pairs of PDFs $(f_1,f_2)$ and $(g_1,g_2)$,
\begin{equation}
\lambda_1D^\rw_{\phi}(f_1
\|g_1)+\lambda_2D^\rw_{\phi
}(f_2
\|g_2)\geq D^\rw_{\phi}(\lambda_1f_1+
\lambda_2f_2\|\lambda_1g_1+
\lambda _2g_2),\label{72}
\end{equation}
with equality iff $\lambda_1\lambda_2=0$ or $\phi
(x)[f_1(x)-f_2(x)]=\phi(x)[g_1(x)-g_2(x)]=0$ $\mu$-a.e.
\end{theorem}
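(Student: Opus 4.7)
Both claims reduce to the same strategy: invoke a scalar convexity/concavity inequality pointwise in $x\in\cX$, multiply by the non-negative factor $\phi(x)$, and integrate against $\mu$. The weight $\phi$ plays no role in establishing the inequality itself; its only effect is to soften the equality criterion on the set $\{\phi=0\}$. The integrability needed to exchange pointwise inequality and integration is guaranteed by the absolute-convergence convention announced in Section~\ref{Intro}.

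For part (a), the input is the strict concavity of $\eta(t)=-t\log t$ on $[0,\infty)$. Applied pointwise at $f_1(x),f_2(x)$ with weights $\lambda_1,\lambda_2$, Jensen's inequality for $\eta$ reads
\[
-\bigl(\lambda_1 f_1(x)+\lambda_2 f_2(x)\bigr)\log\bigl(\lambda_1 f_1(x)+\lambda_2 f_2(x)\bigr) \geq \lambda_1\eta\bigl(f_1(x)\bigr)+\lambda_2\eta\bigl(f_2(x)\bigr),
\]
with equality (when $\lambda_1\lambda_2>0$) iff $f_1(x)=f_2(x)$. Multiplying by $\phi(x)\geq 0$ converts the equality condition to $\phi(x)(f_1(x)-f_2(x))=0$, after which integration against $\mu$ delivers \eqref{71} together with the claimed equality criterion $(\lambda_1f_1+\lambda_2f_2)$-a.e.

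For part (b), the scalar input is the joint convexity on $(0,\infty)^2$ of $\psi(u,v)=u\log(u/v)$, equivalently the two-term log-sum inequality. Applied with $(u_i,v_i)=(f_i(x),g_i(x))$ under the weights $\lambda_1,\lambda_2$, this yields, at each $x$,
\[
\sum_{i=1}^{2}\lambda_i f_i(x)\log\frac{f_i(x)}{g_i(x)} \geq \bigl(\lambda_1 f_1(x)+\lambda_2 f_2(x)\bigr)\log\frac{\lambda_1 f_1(x)+\lambda_2 f_2(x)}{\lambda_1 g_1(x)+\lambda_2 g_2(x)}.
\]
Multiplying by $\phi(x)$ and integrating against $\mu$ produces \eqref{72}. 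I expect the main obstacle to be the equality analysis here: the log-sum inequality is strict unless $f_1/g_1=f_2/g_2$ on $\{\phi>0\}$, which at first sight looks weaker than the stated condition, so the final step is to combine this ratio constraint with the fact that the $f_i$ and $g_i$ are both PM/DFs (so that each side of \eqref{72} is a weighted KL divergence between probability densities) in order to upgrade the pointwise ratio identity to $\phi(f_1-f_2)=\phi(g_1-g_2)=0$ $\mu$-a.e., handling separately the degenerate case $\lambda_1\lambda_2=0$ in which the inequality is a trivial identity.
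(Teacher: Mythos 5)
The paper does not actually prove this theorem: it quotes it from \cite{SSSK} (``Theorems~2.1 and 2.2 from [20] offer the following assertion''), so there is no in-text proof to compare against. Your strategy --- pointwise concavity of $t\mapsto -t\log t$ for (a), the two-term log-sum inequality for (b), then multiplication by $\phi\ge0$ and integration against $\mu$ --- is the standard route and surely the one behind the cited result. Part (a) is complete as you describe it, including the equality discussion.

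The genuine gap is the final step you defer in part (b). Equality in the log-sum inequality at a point $x$ with $\phi(x)>0$ forces only the ratio condition $f_1(x)g_2(x)=f_2(x)g_1(x)$, and no appeal to the normalisations $\int f_i\,\rd\mu=\int g_i\,\rd\mu=1$ can upgrade this to $\phi(f_1-f_2)=\phi(g_1-g_2)=0$. Concretely, take $g_1=f_1$ and $g_2=f_2$ with $f_1\ne f_2$ on $\{\phi>0\}$ and $\lambda_1=\lambda_2=1/2$: all four functions are PDFs, every divergence appearing in \eqref{72} vanishes, so equality holds, yet $\phi(f_1-f_2)\ne0$. Hence the ``only if'' direction of the equality criterion as stated in the theorem is false, and the upgrading step you hope to perform does not exist. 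What your argument actually yields is the correct criterion: for $\lambda_1\lambda_2>0$, equality in \eqref{72} holds iff $\phi(x)\bigl[f_1(x)g_2(x)-f_2(x)g_1(x)\bigr]=0$ for $\mu$-a.e.\ $x$ (modulo the usual conventions where the $g_i$ vanish). You should state that weaker condition and flag the discrepancy with the quoted statement, rather than spend effort trying to derive the stronger one.
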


\section{Weighted Ky-Fan and Hadamard inequalities}\label{KyFan}

The map $C\mapsto\delta(C):=\log{\rm det}(C)$ gives a concave
function of a (strictly)
positive-definite $(d\times d)$ matrix $C$: $
\delta(C)-\lambda_1\delta(C_1)-\lambda_2\delta(C_2)\geq0$,
where $C=\lambda_1C_1+\lambda_2C_2$, $\lambda_1+\lambda_2=1$ and
$\lambda_{1,2}\geq0$.
This is the well-known Ky-Fan inequality. It terms of differential
entropies it is equivalent to the bound
\begin{equation}
\label{eq11} h \bigl(f^{\rNo}_{C} \bigr)-
\lambda_1h \bigl(f^{\rNo}_{C_1} \bigr)-
\lambda_2h \bigl(f^{\rNo
}_{C_2} \bigr)\geq0
\end{equation}
and is closely related to a maximising property of the Gaussian
differential entropy $h(f^\rNo_C)$.

Theorem~\ref{thm4.1} below presents one of new bounds of Ky-Fan type, in
its most explicit form, for the WF $\phi(\mathbf x)=\exp{
(\mathbf x^T\bt )},
\bt\in\bbR^d$.
Cf. Theorem~3.5 from \cite{SSSK}.
In this case the identity $h^\rw_{\phi}(f^{\rNo})=\exp{ (\frac
{1}{2}\bt^TC\bt )}h(f^{\rNo})$ holds true.
Introduce a set
\begin{equation}
{\cal S}= \bigl\{\bt\in\bbR^d:F^{(1)}(\bt)\geq0,\
F^{(2)}(\bt)\leq0 \bigr\}. \label{9}
\end{equation}
Here functions $F^{(1)}$ and $F^{(2)}$ incorporate parameters $C_i$ and
$\lam_i$:
\begin{align}
F^{(1)}(\bt)&=\sum_{i=1}^2
\lambda_i\exp{ \biggl(\frac{1}{2}\bt^TC_i
\bt \biggr)}-\exp{ \biggl(\frac{1}{2}\bt^TC\bt \biggr)},\quad \bt\in\bbR^d,
\nonumber
\\
F^{(2)}(\bt)&= \Biggl[\sum_{i=1}^2
\lambda_i\exp{ \biggl(\frac{1}{2}\bt^TC_i
\bt \biggr)}-\exp{ \biggl(\frac{1}{2}\bt^TC\bt \biggr)} \Biggr]
\log \bigl[(2\pi)^d{\rm det}(C) \bigr]
\nonumber
\\
&\quad +\sum_{i=1}^2\lambda_i
\exp{ \biggl(\frac{1}{2}\bt^TC_i\bt \biggr)} {\rm
tr} \bigl[C^{-1}C_i \bigr]-d\exp{ \biggl(\frac{1}{2}
\bt^TC\bt \biggr)},\quad \bt\in\bbR^d. \label{10}
\end{align}

\begin{theorem}\label{thm4.1}
Given positive-definite matrices $C_1,C_2$ and $\lambda_1,\lambda
_2\in
[0,1]$ with $\lambda_1+\lambda_2=1$, set $C=\lambda_1C_1+\lambda_2C_2$.
Assume $\bt\in{\cal S}$. Then
\begin{equation}
h\bigl(f_C^{\rNo}\bigr)\exp{ \biggl(\frac{1}{2}
\bt^TC\bt \biggr)}-h\bigl(f^{\rNo
}_{C_1}\bigr) \exp {
\biggl(\frac{1}{2}\bt^TC_1\bt \biggr)}-h
\bigl(f^{\rNo}_{C_2}\bigr)\exp{ \biggl(\frac
{1}{2}
\bt^TC_2\bt \biggr)}\geq0, \label{11}
\end{equation}
with equality iff $\lambda_1\lambda_2=0$ or $C_1=C_2$.
\end{theorem}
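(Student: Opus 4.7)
The plan is to emulate the standard derivation of the Ky--Fan inequality---concavity of the entropy chained with the maximum-entropy property of the Gaussian---promoting each link to its weighted analogue via Theorems~2.1 and~3.1. Three bounds combine to give $\lam_1 h^\rw_\phi(f^{\rNo}_{C_1})+\lam_2 h^\rw_\phi(f^{\rNo}_{C_2})\leq h^\rw_\phi(f^{\rNo}_C)$, which rewrites as \eqref{11} via the identity $h^\rw_\phi(f^{\rNo}_C)=\exp(\frac12\bt^T C\bt)\,h(f^{\rNo}_C)$ recorded above the theorem.

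Concretely, set $g(\mathbf{x})=\lam_1 f^{\rNo}_{C_1}(\mathbf{x})+\lam_2 f^{\rNo}_{C_2}(\mathbf{x})$; this mixture has covariance $C$ but is not Gaussian. Theorem~3.1(a) applied to $g$ gives $h^\rw_\phi(g)\geq \lam_1 h^\rw_\phi(f^{\rNo}_{C_1})+\lam_2 h^\rw_\phi(f^{\rNo}_{C_2})$. Next I would apply the weighted Gibbs inequality (Theorem~2.1) to the pair $(g,f^{\rNo}_C)$. With $\phi(\mathbf{x})=\exp(\mathbf{x}^T\bt)$ the Gaussian moment generating function yields $\alpha_\phi(C_i)=\exp(\frac12\bt^T C_i\bt)$, so the sign hypothesis~\eqref{6} reads $\int\phi(g-f^{\rNo}_C)\rd\mathbf{x}=F^{(1)}(\bt)\geq 0$, which is the first defining condition of $\mathcal S$. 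Under that hypothesis, Gibbs delivers $h^\rw_\phi(g)\leq -\int_{\bbR^d}\phi(\mathbf{x})g(\mathbf{x})\log f^{\rNo}_C(\mathbf{x})\rd\mathbf{x}$.

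The bulk of the work is to expand this cross-entropy. Using $-\log f^{\rNo}_C(\mathbf{x})=\frac12\log[(2\pi)^d\det(C)]+\frac{\log e}{2}\mathbf{x}^T C^{-1}\mathbf{x}$, the right-hand side splits into $\frac{\alpha_\phi(g)}{2}\log[(2\pi)^d\det(C)]+\frac{\log e}{2}{\rm tr}[C^{-1}\bPhi_{g,\phi}]$, where $\alpha_\phi(g)=\sum_i\lam_i\exp(\frac12\bt^T C_i\bt)$ and $\bPhi_{g,\phi}=\sum_i\lam_i\bPhi_{C_i,\phi}$ by linearity. Evaluating each $\bPhi_{C_i,\phi}$ by the completion-of-squares identity (which re-expresses $\exp(\mathbf{x}^T\bt) f^{\rNo}_{C_i}(\mathbf{x})$ as a shifted Gaussian of mean $C_i\bt$ and covariance $C_i$, up to the scalar factor $\exp(\frac12\bt^T C_i\bt)$) reduces the right-hand side to an explicit combination of $\log\det(C)$, ${\rm tr}[C^{-1}C_i]$, and $\exp(\frac12\bt^T C_i\bt)$. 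Subtracting $h^\rw_\phi(f^{\rNo}_C)=\exp(\frac12\bt^T C\bt)h(f^{\rNo}_C)$ reproduces, up to a positive prefactor, exactly the function $F^{(2)}(\bt)$ of~\eqref{10}; thus $F^{(2)}(\bt)\leq 0$ furnishes the third link $-\int\phi g\log f^{\rNo}_C\rd\mathbf{x}\leq h^\rw_\phi(f^{\rNo}_C)$. Concatenating the three estimates yields \eqref{11}.

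The equality analysis is read off the equality clauses of the two inequalities invoked: concavity in Theorem~3.1(a) is tight iff $\phi[f^{\rNo}_{C_1}-f^{\rNo}_{C_2}]=0$ a.e.\ or $\lam_1\lam_2=0$, and since $\phi>0$ on an open set the former forces $C_1=C_2$, in which case $g\equiv f^{\rNo}_C$ and the Gibbs step is sharp as well. I expect the principal obstacle to be the algebraic bookkeeping in the cross-entropy expansion: pairing the quadratic factor $\mathbf{x}\mathbf{x}^T$ inside $\bPhi_{C_i,\phi}$ with $C^{-1}$ produces terms quartic in $\bt$ that have to be reorganised---using $\sum_i\lam_i C_i=C$ and ${\rm tr}[C^{-1}C]=d$---into precisely the combination appearing in $F^{(2)}(\bt)$. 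This is routine linear algebra but delicate; organising it through the shifted-Gaussian representation above keeps the accounting transparent.
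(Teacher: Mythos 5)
The paper never actually proves Theorem~\ref{thm4.1} (it defers to [SSSK, Theorem~3.5]), so your attempt must stand on its own, and its third link does not close. Carry your own completion-of-squares computation to the end: with $\phi(\mathbf x)=e^{\mathbf x^T\bt}$ one finds $\varPhi_{C_i,\phi}=e^{\frac12\bt^TC_i\bt}(C_i+C_i\bt\bt^TC_i)$, hence ${\rm tr}[C^{-1}\varPhi_{C_i,\phi}]=e^{\frac12\bt^TC_i\bt}({\rm tr}[C^{-1}C_i]+\bt^TC_iC^{-1}C_i\bt)$ and ${\rm tr}[C^{-1}\varPhi_{C,\phi}]=e^{\frac12\bt^TC\bt}(d+\bt^TC\bt)$. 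Consequently (natural logarithms) $-\int\phi\,g\log f^\rNo_C\,\rd\mathbf x-h^\rw_\phi(f^\rNo_C)=\frac12F^{(2)}(\bt)+\frac12Q(\bt)$, where
\[
Q(\bt)=\sum_{i=1}^2\lam_i e^{\frac12\bt^TC_i\bt}\,\bt^TC_iC^{-1}C_i\bt-e^{\frac12\bt^TC\bt}\,\bt^TC\bt .
\]
The quartic terms cannot be ``reorganised into $F^{(2)}$'': $F^{(2)}$ in \eqref{10} contains no such terms and $Q$ does not vanish. Worse, $Q\ge0$ (for $d=1$ it equals $\frac{t^2}{c}[\sum_i\lam_ic_i^2e^{t^2c_i/2}-c^2e^{t^2c/2}]\ge0$ by convexity of $c\mapsto c^2e^{t^2c/2}$), so it has the unfavourable sign and $F^{(2)}(\bt)\le0$ does not yield your inequality $-\int\phi g\log f^\rNo_C\le h^\rw_\phi(f^\rNo_C)$. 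A second defect: the same computation gives $h^\rw_\phi(f^\rNo_C)=e^{\frac12\bt^TC\bt}\bigl[h(f^\rNo_C)+\frac{\log e}{2}\bt^TC\bt\bigr]$, so the identity quoted above the theorem, on which your final rewriting rests, is itself off by a term (and note \eqref{11} must carry the factors $\lam_1,\lam_2$, else it does not reduce to \eqref{eq11} at $\bt=\mathbf0$).

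The repair is to drop the weighted-entropy detour entirely. Write the target as $\eta(C)-\lam_1\eta(C_1)-\lam_2\eta(C_2)$ with $\eta(A)=e^{\frac12\bt^TA\bt}h(f^\rNo_A)$, and apply the \emph{unweighted} Gaussian Gibbs inequality $\log\det C_i-\log\det C\le({\rm tr}[C^{-1}C_i]-d)\log e$ to each $h(f^\rNo_{C_i})$, multiplied by the scalar $\lam_ie^{\frac12\bt^TC_i\bt}$. Summing and simplifying with $\log[(2\pi e)^d\det C]=\log[(2\pi)^d\det C]+d\log e$ gives exactly $\eta(C)-\sum_i\lam_i\eta(C_i)\ge-\frac12F^{(2)}(\bt)\ge0$, with no leftover quartic terms; equality in the Gibbs step forces $C_i=C$, which gives the stated equality clause. (This derivation shows, incidentally, that $F^{(1)}(\bt)\ge0$ is not needed for the inequality itself.) Your links~1 and~2 are individually correct but introduce exactly the quartic slack that the hypothesis $\bt\in{\cal S}$ cannot absorb.
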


For $\bt=\mathbf0$ we obtain $\phi\equiv1$, and \eqref{11}
coincides with
\eqref{eq11}. Theorem~\ref{thm4.1} is related
to the maximisation property of the weighted Gaussian entropy which
takes the form of Theorem~\ref{thm4.2}.
Cf. Example~3.2 in \cite{SSSK}.

\begin{theorem}\label{thm4.2}
Let $f(\mathbf x)$ be a PDF on $\bbR^d$ with mean $\mathbf0$ and
$(d\times d)$ covariance
matrix~$C$. Let $f^\rNo(\mathbf x)$ stand for the Gaussian PDF, again with
the mean $\mathbf0$ and covariance matrix
$C$. Define $(d\times d)$ matrices
\begin{equation}
\label{eqbPhi}
\bPhi=\int_{\bbR^d}\phi(\mathbf x)\mathbf x\mathbf x^{\rT}f(\mathbf x)\rd \mathbf x,\qquad
\bPhi^\rNo_C=\int_{\bbR^d}\phi(\mathbf x)\mathbf x\mathbf x^{\rT}f^\rNo_C(\mathbf x)\rd\mathbf x.
\end{equation}
Cf. \eqref{eq:WDEGa}. Assume that
\[
\int_{\bbR^d}\phi(\mathbf x) \bigl[f(\mathbf
x)-f^\rNo_C(\mathbf x) \bigr]\rd\mathbf x\geq0
\]
and
\[
\log \bigl[(2\pi)^d ({\rm{det}}\,C ) \bigr]\int_{\bbR^d}
\phi (\mathbf x ) \bigl[f(\mathbf x)-f^\rNo_C(\mathbf x)
\bigr]\rd\mathbf x +{\rm{tr}} \bigl[C^{-1} \bigl(\bPhi^\rNo_C-
\bPhi \bigr) \bigr]\leq0.
\]
Then $h^\rw_\phi(f)\leq h^\rw_\phi(f^\rNo_C)$, with equality iff
$\phi(\mathbf x) [f(\mathbf x)-f^\rNo_C(\mathbf x) ]=0$
a.e.
\end{theorem}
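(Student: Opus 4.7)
The plan is to deduce the inequality from the weighted Gibbs inequality (Theorem 2.1) applied to the pair $(f, f^\rNo_C)$, combined with an explicit computation of the cross-entropy $-\int \phi f \log f^\rNo_C\,\rd\mathbf x$. Structurally the argument is the weighted analogue of the classical proof that among densities with a prescribed covariance, the Gaussian maximises differential entropy.

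First, I would note the decomposition
\[
D^\rw_\phi(f \| f^\rNo_C) = -h^\rw_\phi(f) - \int \phi(\mathbf x) f(\mathbf x)\log f^\rNo_C(\mathbf x)\,\rd\mathbf x.
\]
The first displayed hypothesis of the theorem is precisely condition \eqref{6} of Theorem 2.1 with $g = f^\rNo_C$, so that theorem yields $D^\rw_\phi(f\|f^\rNo_C) \geq 0$, with equality iff $\phi(\mathbf x)\bigl[f(\mathbf x) - f^\rNo_C(\mathbf x)\bigr] = 0$ $f$-a.e.

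Next, I would evaluate the cross-entropy by plugging in $-\log f^\rNo_C(\mathbf x) = \frac{1}{2}\log[(2\pi)^d{\rm det}(C)] + \frac{\log e}{2}\mathbf x^\rT C^{-1}\mathbf x$ and applying the trace identity $\mathbf x^\rT C^{-1}\mathbf x = {\rm tr}[C^{-1}\mathbf x\mathbf x^\rT]$ to get
\[
-\int \phi f\log f^\rNo_C\,\rd\mathbf x = \frac{1}{2}\log\bigl[(2\pi)^d{\rm det}(C)\bigr]\int \phi f\,\rd\mathbf x + \frac{\log e}{2}{\rm tr}\bigl[C^{-1}\bPhi\bigr].
\]
Specialising to $f = f^\rNo_C$ recovers formula \eqref{eq:WDEGa}. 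Subtracting the two instances of this identity and inserting them into step one yields
\[
h^\rw_\phi(f^\rNo_C) - h^\rw_\phi(f) = D^\rw_\phi(f\|f^\rNo_C) + \frac{1}{2}R,
\]
where $R$ is a linear combination of $\int \phi[f - f^\rNo_C]\,\rd\mathbf x$ and ${\rm tr}[C^{-1}(\bPhi^\rNo_C - \bPhi)]$ whose coefficients match the expression on the left-hand side of the second hypothesis. That hypothesis forces $R \geq 0$, and combined with $D^\rw_\phi \geq 0$ from the first step this gives $h^\rw_\phi(f^\rNo_C) \geq h^\rw_\phi(f)$.

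For equality, both $D^\rw_\phi(f\|f^\rNo_C) = 0$ and $R = 0$ are required. The former, by the equality clause of Theorem 2.1, already forces $\phi(\mathbf x)[f(\mathbf x) - f^\rNo_C(\mathbf x)] = 0$ a.e., which in turn annihilates both scalar quantities entering $R$; the converse is immediate. The main obstacle is keeping careful track of signs in the derivation of $R$, so that its sign lines up with the direction stipulated in the second hypothesis; apart from this bookkeeping, the argument is a routine consequence of Theorem 2.1.
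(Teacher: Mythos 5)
Your strategy is the intended one: the paper itself gives no proof of Theorem~\ref{thm4.2} (it defers to Example~3.2 of \cite{SSSK}), but the route via the weighted Gibbs inequality combined with the explicit Gaussian cross-entropy is exactly how the neighbouring results are organised (the weighted Hadamard inequality, for instance, is literally $D^\rw_\phi(f^\rNo_C\|\prod_jf^\rNo_{C_{jj}})\geq0$ unpacked), and your first two steps --- invoking Theorem~2.1 under the first hypothesis and computing $-\int\phi f\log f^\rNo_C$ --- are correct.

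The difficulty sits precisely in the step you flagged as ``bookkeeping'' and then waved through. Carrying the computation out, with $L:=\log[(2\pi)^d\det C]$,
\begin{align*}
h^\rw_\phi\bigl(f^\rNo_C\bigr)-h^\rw_\phi(f)
&=D^\rw_\phi\bigl(f\|f^\rNo_C\bigr)\\
&\quad-\frac{1}{2}\biggl\{L\int_{\bbR^d}\phi(\mathbf x)\bigl[f(\mathbf x)-f^\rNo_C(\mathbf x)\bigr]\rd\mathbf x
+(\log\re)\,{\rm tr}\bigl[C^{-1}\bigl(\bPhi-\bPhi^\rNo_C\bigr)\bigr]\biggr\},
\end{align*}
so the condition that makes the subtracted bracket nonpositive is
$L\int\phi[f-f^\rNo_C]\rd\mathbf x+(\log\re)\,{\rm tr}[C^{-1}(\bPhi-\bPhi^\rNo_C)]\leq0$.
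The second hypothesis as printed carries the trace term with the \emph{opposite} sign, ${\rm tr}[C^{-1}(\bPhi^\rNo_C-\bPhi)]$ (the missing $\log\re$ is harmless if $\log=\ln$). The two conditions are not equivalent unless ${\rm tr}[C^{-1}(\bPhi-\bPhi^\rNo_C)]=0$, so your claim that the coefficients of $R$ ``match the expression on the left-hand side of the second hypothesis'' is false for the statement as written, and the argument does not close: the printed hypothesis does not force $R\geq0$. Almost certainly the theorem statement contains a sign typo in the trace term (it should read $\bPhi-\bPhi^\rNo_C$); with that correction your proof is complete, including the equality analysis, since $\phi(f-f^\rNo_C)=0$ a.e.\ indeed annihilates both $\int\phi[f-f^\rNo_C]$ and $\bPhi-\bPhi^\rNo_C$. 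But as a proof of the theorem exactly as printed there is a genuine gap, located at the one point where you asserted rather than verified the sign.
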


Theorems~\ref{thm4.1} and~\ref{thm4.2} are a part of a series of the so-called weighted
{\it determinantal} inequalities. See
\cite{SSSK,SSS}. Here we will focus on a weighted version of {\it
Hadamard} inequality asserting that
for a $(d\times d)$ positive-definite matrix $C=(C_{\mathit{ij}})$,
${\rm{det}}\;C\leq\prod_{j=1}^dC_{\mathit{jj}}$ or $\delta(C)\leq
\sum_{j=1}^d\log C_{\mathit{jj}}$. Cf.
\cite{SSSK}, Theorem~3.7. Let $f^\rNo_{C_{\mathit{jj}}}$ stand for the Gaussian
PDF on $\bbR$ with the zero mean
and the variance $C_{\mathit{jj}}$. Set:
\[
\alpha=\int_{\bbR^d}\phi(\mathbf x)f^\rNo_C(
\mathbf x)\rd\mathbf x\;\hbox{ (cf. \eqref {eq:WDEGa}).}
\]

\begin{theorem}
Assume that
\[
\int_{\bbR^d}\phi(\mathbf x) \Biggl[f^\rNo_C(
\mathbf x)-\prod_{j=1}^df^\rNo
_{C_{\mathit{jj}}}(x_j) \Biggr]\rd\mathbf x\geq0.
\]
Then, with the matrix $\bPhi=(\varPhi_{\mathit{ij}})$ as in \eqref{eqbPhi},
\begin{align*}
\alpha\log\prod_{j=1}^d (2\pi C_{\mathit{jj}})
\,{+}\,(\log\re)\sum_{j=1}^dC_{\mathit{jj}}^{-1}
\varPhi_{\mathit{jj}} \,{-}\,\alpha\log \bigl[(2\pi)^d({\rm{det}}\,C)
\bigr]\,{-}\,(\log\re){\rm{tr}}\,C^{-1}\bPhi\,{\geq}\,0.
\end{align*}
\end{theorem}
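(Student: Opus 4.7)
The plan is to recognise the product of one-dimensional Gaussians as itself a Gaussian and then apply the weighted Gibbs inequality (Theorem~1.3 of Section~\ref{Gibbs}). Write $D$ for the $d\times d$ diagonal matrix $\mathrm{diag}(C_{11},\dots,C_{dd})$, so that
\[
g(\mathbf x):=\prod_{j=1}^d f^\rNo_{C_{jj}}(x_j)=f^\rNo_D(\mathbf x).
\]
The assumption $\int_{\bbR^d}\phi(\mathbf x)[f^\rNo_C(\mathbf x)-g(\mathbf x)]\rd\mathbf x\geq 0$ is then precisely the hypothesis \eqref{6} of the weighted Gibbs inequality with $f=f^\rNo_C$. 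Consequently $D^\rw_\phi(f^\rNo_C\|f^\rNo_D)\geq 0$, and the task reduces to unpacking this divergence into the two determinantal/tracial pieces that appear in the statement.

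To perform the unpacking, I would split
\[
D^\rw_\phi\bigl(f^\rNo_C\|f^\rNo_D\bigr)
=\int\phi(\mathbf x)f^\rNo_C(\mathbf x)\log f^\rNo_C(\mathbf x)\rd\mathbf x
-\int\phi(\mathbf x)f^\rNo_C(\mathbf x)\log f^\rNo_D(\mathbf x)\rd\mathbf x.
\]
The first term equals $-h^\rw_\phi(f^\rNo_C)$, which by \eqref{eq:WDEGa} is
$-\tfrac{\alpha}{2}\log[(2\pi)^d\det C]-\tfrac{\log\re}{2}\,\mathrm{tr}[C^{-1}\bPhi]$, since here $\alpha=\alpha_\phi(C)$ and $\bPhi=\bPhi^\rNo_C$. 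For the second term, I would use the explicit form
\[
-\log f^\rNo_D(\mathbf x)=\tfrac{1}{2}\log\prod_{j=1}^d(2\pi C_{jj})+\tfrac{\log\re}{2}\sum_{j=1}^d C_{jj}^{-1}x_j^2,
\]
so that integrating against $\phi f^\rNo_C$ produces $\tfrac{\alpha}{2}\log\prod_j(2\pi C_{jj})+\tfrac{\log\re}{2}\sum_j C_{jj}^{-1}\varPhi_{jj}$, the diagonal of the matrix $\bPhi$ appearing in the statement.

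Adding the two contributions gives
\[
2D^\rw_\phi\bigl(f^\rNo_C\|f^\rNo_D\bigr)
=\alpha\log\prod_{j=1}^d(2\pi C_{jj})+(\log\re)\sum_{j=1}^d C_{jj}^{-1}\varPhi_{jj}
-\alpha\log\bigl[(2\pi)^d\det C\bigr]-(\log\re)\,\mathrm{tr}[C^{-1}\bPhi],
\]
and nonnegativity of the left-hand side, supplied by the weighted Gibbs step, is exactly the claimed inequality. The equality case transfers from Theorem~1.3: it holds iff $\phi(\mathbf x)[f^\rNo_C(\mathbf x)-f^\rNo_D(\mathbf x)]=0$ $f^\rNo_C$-a.e., i.e., essentially when $C$ is already diagonal on the support of $\phi$.

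There is no genuinely hard step; the only thing to be careful about is matching conventions in the two quadratic-moment matrices (that $\bPhi$ in the statement is the Gaussian moment matrix $\bPhi^\rNo_C$ of \eqref{eq:WDEGa}, and that $\mathrm{tr}[D^{-1}\bPhi]=\sum_j C_{jj}^{-1}\varPhi_{jj}$ because $D$ is diagonal). Once this identification is made, the proof is a one-line application of the weighted Gibbs inequality followed by the Gaussian WDE formula \eqref{eq:WDEGa}.
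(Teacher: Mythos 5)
Your proof is correct and follows the route the paper intends: the paper gives no proof of this theorem (it defers to [20], Theorem 3.7), but the stated hypothesis is exactly condition \eqref{6} of the weighted Gibbs inequality with $f=f^\rNo_C$ and $g$ the product of the marginal Gaussians, and expanding $D^\rw_\phi(f^\rNo_C\|f^\rNo_D)$ via \eqref{eq:WDEGa} yields precisely (twice) the claimed expression, as you show. Your identification of $\bPhi$ with the Gaussian moment matrix $\bPhi^\rNo_C$ and the remark that $\mathrm{tr}[D^{-1}\bPhi]=\sum_j C_{jj}^{-1}\varPhi_{jj}$ are the right bookkeeping points.
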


\section{A weighted Fisher information matrix}\label{Fisher}
Let $\mathbf X=(X_1,\ldots,X_d)$ be a random $(1\times d)$ vector with PDF
$f_{\utheta}(\mathbf x)=f_{\mathbf X}(\mathbf x,\utheta)$ where
$\utheta=(\theta_1,\ldots
, \theta_m)\in\bbR^m$.
Suppose that $\utheta\to f_{\utheta}$ is $C^1$.
Define a score vector $S(\mathbf X,\utheta)=\B1(f_{\utheta}(\mathbf
x)>0)(\frac
{\partial}{\partial\theta_i}\log f_{\utheta}(\mathbf x), i=1,\ldots,m)$.
The $m\times m$ weighted Fisher information matrix (WFIM) is defined as
\begin{equation}
J_{\phi}^\rw(f_{\utheta})=J_{\phi}^\rw(
\mathbf X, \utheta)=\bE \bigl[\phi(\mathbf X )S(\mathbf X;\utheta)S^T(
\mathbf X;\utheta) \bigr]. \label{50}
\end{equation}
\begin{theorem}[Connection between WFIM and weighted KL-divergence measures]
For
smooth families $\{f_{\theta}, \theta\in\varTheta\in\bbR^1\}$ and a
given WF $\phi$, we get
\begin{align}
D^\rw_{\phi}(f_{\theta_1}\|f_{\theta_2})&=
\frac{1}{2}J^\rw_{\phi}(X,\theta_1) (
\theta_2-\theta_1)^2+\bE_{\theta_1}\bigl[
\phi(X)D_{\theta} \log f_{\theta_1}(X)\bigr](\theta_1-
\theta_2)
\nonumber
\\
&\quad -\frac{1}{2}\bE_{\theta_1}\biggl[\phi(X)\frac{D_{\theta}^2 f_{\theta_1}(X)}{f_{\theta_1}(X)}
\biggr](\theta_2-\theta_1)^2+o\bigl(\llvert
\theta_1-\theta_2\rrvert^2\bigr)
\bE_{\theta_1}\xch{\bigl[\phi(X)\bigr]}{\bigl[\phi(X)\bigr].} \label{111}
\end{align}
where $D_{\theta}$ stands for $\frac{\partial}{\partial\theta}$.
\end{theorem}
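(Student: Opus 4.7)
The plan is to Taylor-expand $\log f_{\theta_2}(x)$ in the parameter $\theta_2$ about $\theta_1$ inside the integral defining $\Dwphi$, and then to reorganise the second-order coefficient so that the weighted Fisher information $J^\rw_\phi(X,\theta_1)$ from \eqref{50} appears explicitly.

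First I would rewrite
\[
\Dwphi(f_{\theta_1}\|f_{\theta_2})=-\int_\cX\phi(x)f_{\theta_1}(x)\bigl[\log f_{\theta_2}(x)-\log f_{\theta_1}(x)\bigr]\rd\mu(x),
\]
and, for each $x$ in the support of $f_{\theta_1}$, apply Taylor's theorem to the smooth map $\theta\mapsto\log f_\theta(x)$ at $\theta=\theta_1$:
\[
\log f_{\theta_2}(x)-\log f_{\theta_1}(x)=D_\theta\log f_{\theta_1}(x)(\theta_2-\theta_1)+\tfrac12 D_\theta^2\log f_{\theta_1}(x)(\theta_2-\theta_1)^2+r(x;\theta_1,\theta_2),
\]
with pointwise remainder $r(x;\theta_1,\theta_2)=o(|\theta_1-\theta_2|^2)$ as $\theta_2\to\theta_1$. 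Multiplying by $-\phi(x)f_{\theta_1}(x)$ and integrating against $\mu$ yields, term by term,
\[
\Dwphi(f_{\theta_1}\|f_{\theta_2})=\bE_{\theta_1}[\phi(X)D_\theta\log f_{\theta_1}(X)](\theta_1-\theta_2)-\tfrac12\bE_{\theta_1}[\phi(X)D_\theta^2\log f_{\theta_1}(X)](\theta_2-\theta_1)^2+R(\theta_1,\theta_2),
\]
where $R$ is the integral of $-\phi(x)f_{\theta_1}(x)r(x;\theta_1,\theta_2)$ against $\mu$.

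Second, I would invoke the pointwise identity $D_\theta^2\log f_\theta(x)=D_\theta^2 f_\theta(x)/f_\theta(x)-(D_\theta\log f_\theta(x))^2$ on $\{f_\theta>0\}$ to split the quadratic coefficient into
\[
-\tfrac12\bE_{\theta_1}\bigl[\phi(X)D_\theta^2\log f_{\theta_1}(X)\bigr]=\tfrac12 J^\rw_\phi(X,\theta_1)-\tfrac12\bE_{\theta_1}\!\left[\phi(X)\frac{D_\theta^2 f_{\theta_1}(X)}{f_{\theta_1}(X)}\right],
\]
using the scalar ($m=1$) version of definition \eqref{50}. Substituting gives precisely the first three terms on the right-hand side of \eqref{111}. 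It then remains to verify that $R(\theta_1,\theta_2)$ has the advertised form $o(|\theta_1-\theta_2|^2)\bE_{\theta_1}[\phi(X)]$: dominating the pointwise Taylor remainder by $\varepsilon(\theta_1,\theta_2)|\theta_1-\theta_2|^2$ with $\varepsilon\to0$ and applying dominated convergence produces exactly the factor $\int\phi(x)f_{\theta_1}(x)\rd\mu(x)=\bE_{\theta_1}[\phi(X)]$.

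The main obstacle is precisely this last step. In the classical unweighted setting ($\phi\equiv1$) the linear coefficient vanishes since $\int D_\theta f_{\theta_1}\rd\mu=0$, and the curvature term $\bE[D_\theta^2 f_{\theta_1}/f_{\theta_1}]$ vanishes for the same reason, leaving only $\tfrac12 J(X,\theta_1)(\theta_2-\theta_1)^2$. With a non-constant $\phi$ neither simplification is available, so every coefficient must be retained and the remainder must be controlled uniformly against the weighted measure $\phi f_{\theta_1}\rd\mu$. Passing from the pointwise $o(|\theta_1-\theta_2|^2)$ of Taylor's theorem to an integrated $o(|\theta_1-\theta_2|^2)\bE_{\theta_1}[\phi(X)]$ requires a local domination on, say, $D_\theta^3\log f_\theta(x)$ in a neighbourhood of $\theta_1$ — this is the only genuinely analytic input beyond the algebraic manipulation above, and it is covered by the ``smooth families'' hypothesis together with the absolute-convergence convention stated in the introduction.
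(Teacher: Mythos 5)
Your proposal is correct and follows essentially the same route as the paper's proof: a second-order Taylor expansion of $\log f_{\theta_2}$ about $\theta_1$ under the weighted integral, followed by the identity $D_\theta^2\log f_{\theta_1}=D_\theta^2 f_{\theta_1}/f_{\theta_1}-(D_\theta\log f_{\theta_1})^2$ to extract $J^\rw_\phi(X,\theta_1)$ from the quadratic coefficient. Your treatment of the remainder is in fact slightly more careful than the paper's, which simply writes the remainder as $O_x(|\theta_2-\theta_1|^3)$ and assumes the interchange of differentiation and expectation.
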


\begin{proof}
By virtue of a Taylor expansion of $\log f_{\theta_2}$ around $\theta
_1$, we obtain
\begin{equation}
\log f_{\theta_2}=\log f_{\theta_1}+ D_{\theta}\log
f_{\theta
_1}(\theta _2-\theta_1)+
\frac{1}{2}D_{\theta}^2\log f_{\theta_1}(\theta
_2-\theta_1)^2 +O_x \bigl(\llvert
\utheta_2-\utheta_1\rrvert^3 \bigr).
\label{112}
\end{equation}
Here $O_x(|\theta_2-\theta_1|^3)$
denotes the reminder term which has a hidden dependence on~$x$. Multiply
both sides of (\ref{112}) by $\phi$ and take expectations assuming that
we can interchange
differentiation and expectation appropriately. Next, observe that
\begin{equation}
D_{\theta}^2\log f_{\theta_1}=\frac{D_{\theta}^2f_{\theta
_1}}{f_{\theta
_1}}-
\frac{(D_{\theta}f_{\theta_1})^2}{f_{\theta_1}^2}.
\end{equation}
Hence
\begin{equation}
\bE_{\theta_1} \bigl[\phi D_{\theta}^2\log
f_{\theta_1} \bigr]=\bE _{\theta
_1} \biggl[\phi\frac{D_{\theta}^2f_{\theta_1}}{f_{\theta_1}}
\biggr]-J^\rw _{\phi}(f_{\theta_1}).
\end{equation}
Therefore the claimed result, i.e., (\ref{111}), is achieved.
\end{proof}

\section{Weighted entropy power inequality}\label{WEPI}
Let $\mathbf X_1,\mathbf X_2$ be independent RVs with PDFs $f_1,f_2$
and $\mathbf X=\mathbf X
_1+\mathbf X_2$. The famous Shannon entropy power
inequality (EPI) states that
\begin{equation}
h(\mathbf X_1+\mathbf X_2)\geq
h\xch{(\mathbf N_1+\mathbf N_2),}{(\mathbf N_1+\mathbf N_2)}
\label{11a}
\end{equation}
where $\mathbf N_1, \mathbf N_2$ are Gaussian N$(\mathbf0,\sigma
^2\mathbf I_d)$ RVs such that
$h(\mathbf X_i)=h(\mathbf N_i), i=1,2$. Equivalently,
\begin{equation}
e^{\frac{2}{d}h(\mathbf X_1+\mathbf X_2)}\geq e^{\frac{2}{d}h(\mathbf
X_1)}+\xch{e^{\frac{2}{d}h(\mathbf X_2)},}{e^{\frac{2}{d}h(\mathbf X_2)}}
\label{11b}
\end{equation}
see, e.g., \cite{CT,KS1}. The EPI is widely used in electronics, i.e.,
consider a RV $\mathbf Y$ which satisfies
\begin{equation}
\mathbf Y_n=\sum_{i=0}^{\infty} a_i\mathbf X_{n-i}, n\in \mathbf Z^1, \quad
\sum_{i=0}^{\infty}\llvert a_i\rrvert^2<\infty,
\end{equation}
where $a_i\in\bR^1$, $\{\mathbf X_i\}$ are IID RVs. Then the EPI means
\begin{equation}
h(\mathbf Y)\geq h(\mathbf X)+\frac{1}{2}
\log \xch{\Biggl(\sum_{i=0}^{\infty} \llvert a_i\rrvert^2\Biggr),}{\Biggl(\sum_{i=0}^{\infty} \llvert a_i\rrvert^2\Biggr)}
\end{equation}
with equality if and only if either $\mathbf X$ is Gaussian or if
$\mathbf Y_n=\mathbf X_{n-k}$, for some $k$, that is, the filtering operation
is a pure delay.
Clearly, a possible extension of the EPI gives more flexibility in
signal processing.
We are interested in the weighted entropy
power inequality (WEPI)
\begin{equation}
\kappa:=\exp{ \biggl(\frac{2h^\rw_{\phi}(\mathbf X_1)}{d\bE\phi
(\mathbf X_1)} \biggr)}+\exp{ \biggl(\frac{2h^\rw_{\phi}(\mathbf X_2)}{d\bE\phi(\mathbf
X_2)}
\biggr)}\leq\exp { \biggl(\frac{2h^\rw_{\phi}(\mathbf X)}{d\bE\phi(\mathbf X)} \biggr)}. \label{12}
\end{equation}
Note that (\ref{12}) coincides with (\ref{11b}) when $\phi\equiv1$.
Let $d=1$, we set
\begin{equation}
\alpha={\rm tan}^{-1} \biggl[\exp{ \biggl(\frac{h^\rw_{\phi}(X_2)}{\bE\phi(X_2)}-\frac{h^\rw_{\phi}(X_1)}{\bE\phi(X_1)} \biggr)} \biggr],
\quad
Y_1=\frac{X_1}{\cos\alpha},
Y_2=\frac{X_2}{\sin\alpha}. \label{13a}
\end{equation}

\begin{theorem}
Given independent RVs $X_1,X_2\in\bbR^1$ with PDFs $f_1,f_2$, and the
weight function $\phi$, set $X=X_1+X_2$.
Assume the following conditions:

\noindent(i)\vspace*{-12pt}
\begin{align}
\bE\phi(X_i)&\geq\bE\phi(X)\quad \mbox{if }\kappa\geq1,\ i=1,2,
\nonumber
\\
\bE\phi(X_i)&\leq\bE\phi(X)\quad \mbox{if } \kappa\leq1,\ i=1,2.
\label{14}
\end{align}
(ii) With $Y_1,Y_2$ and $\alpha$ as defined in (\ref{13a}),
\begin{equation}
(\cos\alpha)^2h^\rw_{\phi_c}(Y_1)+(\sin
\alpha)^2h^\rw_{\phi
_s}(Y_2)\leq
h^\rw_{\phi}(X), \label{15}
\end{equation}
where $\phi_c(x)=\phi(x\cos\alpha), \phi_s(x)=\phi(x\sin\alpha
)$ and
\begin{equation}
h^\rw_{\phi_c}(Y_1)=-\bE \bigl[\phi_c(Y_1)\log \bigl(f_{Y_1}(Y_1)\bigr) \bigr],
\qquad
h^\rw _{\phi_s}(Y_2)=-\bE \bigl[\phi_s(Y_2)\log \bigl(f_{Y_2}(Y_2)\bigr) \bigr].
\label{16}
\end{equation}
Then the WEPI holds.
\end{theorem}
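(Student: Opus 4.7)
The plan is to mimic the Lieb--Stam rotation proof of the classical EPI. Writing $X=X_1+X_2=Y_1\cos\alpha+Y_2\sin\alpha$ with the scaled independent pair $(Y_1,Y_2)$ from \eqref{13a}, assumption (ii) plays the role of Lieb's splitting inequality, while assumption (i) regulates the sign of $\log\kappa$. The strategy unfolds as: (a) derive a weighted scaling identity relating $h^\rw_{\phi_c}(Y_1)$ and $h^\rw_{\phi_s}(Y_2)$ back to $h^\rw_\phi(X_1)$ and $h^\rw_\phi(X_2)$; (b) plug it into \eqref{15}; (c) observe that the specific choice of $\alpha$ in \eqref{13a} collapses the right-hand side to a single multiple of $\tfrac12\log\kappa$; (d) invoke \eqref{14} to convert that into the WEPI.

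For (a), the relations $Y_1=X_1/\cos\alpha$ and $\phi_c(y)=\phi(y\cos\alpha)$ give $\phi_c(Y_1)=\phi(X_1)$ and $f_{Y_1}(Y_1)=\cos\alpha\cdot f_{X_1}(X_1)$, so
\begin{equation*}
h^\rw_{\phi_c}(Y_1)=-\bE\bigl[\phi(X_1)\bigl(\log\cos\alpha+\log f_{X_1}(X_1)\bigr)\bigr]=h^\rw_\phi(X_1)-(\log\cos\alpha)\,\bE\phi(X_1),
\end{equation*}
and symmetrically $h^\rw_{\phi_s}(Y_2)=h^\rw_\phi(X_2)-(\log\sin\alpha)\,\bE\phi(X_2)$. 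For (b)--(c), set $u_i=h^\rw_\phi(X_i)/\bE\phi(X_i)$. The relation $\tan\alpha=\exp(u_2-u_1)$ yields $\cos^2\alpha=e^{2u_1}/\kappa$ and $\sin^2\alpha=e^{2u_2}/\kappa$, whence $u_1-\log\cos\alpha=u_2-\log\sin\alpha=\tfrac12\log\kappa$. Substituting the scaling identities into \eqref{15} and factoring $\bE\phi(X_i)$ out of each bracket therefore leaves
\begin{equation*}
h^\rw_\phi(X)\geq\tfrac12\log\kappa\,\bigl[\cos^2\alpha\,\bE\phi(X_1)+\sin^2\alpha\,\bE\phi(X_2)\bigr].
\end{equation*}

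Step (d) is a sign analysis. If $\kappa\geq1$, then $\log\kappa\geq0$ and, by \eqref{14}, $\bE\phi(X_i)\geq\bE\phi(X)$, so the bracket is $\geq\bE\phi(X)(\cos^2\alpha+\sin^2\alpha)=\bE\phi(X)$. If $\kappa\leq1$, then $\log\kappa\leq0$ and, by \eqref{14}, the bracket is $\leq\bE\phi(X)$, so multiplying by the non-positive factor $\tfrac12\log\kappa$ again preserves the inequality. In either regime $h^\rw_\phi(X)\geq\tfrac12\log\kappa\cdot\bE\phi(X)$, which exponentiates to \eqref{12}. The substantive obstacle is assumption (ii) itself --- a weighted analogue of Lieb's splitting inequality, whose verification in natural settings carries the genuinely hard analytic content; once it is granted, the rest is a change-of-variables identity plus the two-branch sign bookkeeping forced by the fact that, in the weighted setting, $\log\kappa$ is not automatically non-negative.
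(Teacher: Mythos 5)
Your proposal is correct and follows essentially the same route as the paper: the scaling identity for $h^\rw_{\phi_c}(Y_1)$, $h^\rw_{\phi_s}(Y_2)$ is the paper's Eqn \eqref{31}, the collapse via $u_i-\log\cos\alpha=u_i-\log\sin\alpha=\tfrac12\log\kappa$ reproduces \eqref{33} (your version even avoids a typo there, where $\bE\phi(X_1)$ appears twice), and the two-branch sign analysis using \eqref{14} is exactly how the paper reaches \eqref{34} before exponentiating.
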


Paying homage to \cite{L} we call (\ref{15}) weighted Lieb's splitting
inequality (WLSI).
In some cases the WLSI may be effectively checked.
\begin{proof}
Note that
\begin{align}
h^\rw_{\phi}(X_1)&=h^\rw_{\phi_c}(Y_1)+
\bE\phi(X_1)\log\cos\alpha,
\nonumber
\\
h^\rw_{\phi}(X_2)&=h^\rw_{\phi_s}(Y_2)+
\bE\phi(X_2)\log\sin\alpha. \label{31}
\end{align}
Using (\ref{15}), we have the following inequality
\begin{align}
h^\rw_{\phi}(X)&\geq(\cos\alpha)^2
\bigl[h^\rw_{\phi}(X_1)-\bE\phi(X_1)
\log\cos\alpha \bigr]
\nonumber
\\
&\quad +(\sin\alpha)^2 \bigl[h^\rw_{\phi}(X_2)-
\bE\phi(X_2)\log\sin\alpha \xch{\bigr].}{\bigr]} \label{32}
\end{align}
Furthermore, recalling the definition of $\kappa$ in (\ref{12}) we obtain
\begin{equation}
h^\rw_{\phi}(X)\geq\frac{1}{2\kappa} \bigl[\bE
\phi(X_1)\log \kappa \bigr]\exp{ \biggl(\frac{2h^\rw_{\phi}(X_1)}{\bE\phi(X_1)} \biggr)}+
\frac{1}{2\kappa} \bigl[\bE\phi(X_1)\log\kappa \bigr]\exp{ \biggl(
\frac{2h^\rw
_{\phi}(X_2)}{\bE\phi(X_2)} \biggr)}. \label{33}
\end{equation}
By virtue of assumption (\ref{14}), we derive
\begin{equation}
h^\rw_{\phi}(X)\geq\frac{1}{2}\bE\phi(X)\log\kappa.
\label{34}
\end{equation}
The definition of $\kappa$ in (\ref{12}) leads directly to the result.
\end{proof}

\begin{example} Let $d=1$ and $X_1\sim{}$N$(0,\sigma_1^2)$, $X_2\sim
{}$N$(0,\sigma_2^2)$. Then the WLSI (\ref{15}) takes the following form
\begin{align}
&\log \bigl[2\pi\bigl(\sigma_1^2+\sigma_2^2
\bigr) \bigr]\bE\phi(X)+\frac{\log e}{\sigma_1^2+\sigma_2^2}\bE\bigl[X^2\phi(X)\bigr]
\nonumber
\\
&\quad \geq(\cos\alpha)^2 \biggl[\log \biggl(\frac{2\pi\sigma_1^2}{(\cos\alpha)^2}
\biggr) \biggr]\bE\phi(X_1)+\frac{(\cos\alpha)^2\log e}{\sigma_1^2}\bE\bigl[X_1^2
\phi(X_1)\bigr]
\nonumber
\\
&\qquad +(\sin\alpha)^2 \biggl[\log \biggl(\frac{2\pi\sigma_2^2}{(\sin\alpha)^2}\biggr)
\biggr]\bE\phi(X_2)+\frac{(\sin\alpha)^2\log e}{\sigma_2^2}\bE\bigl[X_2^2
\phi(X_2)\bigr]. \label{17}
\end{align}
\end{example}

\begin{example} Let $d=1$, $X=X_1+X_2$, $X_1\sim{}$U$[a_1,b_1]$ and
$X_2\sim{}$U$[a_2,b_2]$ be independent.
Denote by $\varPhi(x)=\int_0^x \phi(u){\rm d}u$ and $L_i=b_i-a_i,\ i=1,2$.
The WDE $h^\rw_{\phi}(X_i)=\frac{\varPhi(b_i)-\varPhi(a_i)}{L_i}\log
L_i$. Then
the inequality $\kappa\geq(\leq)1$ takes the form
$L_1^2+L_2^2\geq(\leq) 1$. Suppose for definiteness that $L_2\geq L_1$
or, equivalently, $C_1:=a_2+b_1\leq a_1+b_2=:C_2$.
Inequalities (\ref{14}) take the form
\begin{equation}
L_2 \bigl[\varPhi(b_1)-\varPhi(a_1) \bigr],
\qquad
L_1 \bigl[\varPhi(b_2)-\varPhi(a_2) \bigr]\geq( \leq) \bE \phi(X). \label{17a}
\end{equation}
The WLSI takes the form
\begin{align}
-\varLambda+\log(L_1L_2)\bE\phi(X)&\geq(\cos
\alpha)^2\frac{\varPhi(b_1)-\varPhi(a_1)}{L_1}\log \biggl(\frac{L_1}{\cos\alpha} \biggr)
\nonumber
\\
&\quad +(\sin\alpha)^2\frac{\varPhi(b_2)-\varPhi(a_2)}{L_2}\log \biggl(
\frac{L_2}{\sin\alpha} \xch{\biggr),}{\biggr)} \label{17b}
\end{align}
where
\begin{align}
\varLambda&=\frac{\log L_1}{L_2} \bigl[\varPhi(C_1)-
\varPhi(C_2) \bigr]+\frac{1}{L_1L_2} \Biggl[\int
_A^{C_1}\phi(x) (x-A)\log(x-A){\rm d}x
\nonumber
\\
&\quad +\int_{C_2}^B\phi(x) (B-x)\log(B-x){\rm d}x
\Biggr],
\nonumber
\\
A&=a_1+a_2,\ B=b_1+b_2.
\label{17c}
\end{align}
Finally, define $\varPhi^*(x)=\int_0^xu\phi(u){\rd u}$ and note that
\begin{align}
\bE\phi(X)&=\frac{1}{L_1L_2} \bigl[\varPhi^*(C_1)-\varPhi^*(A)-
\varPhi^*(B)+\varPhi^*(C_2) \bigr]
\nonumber
\\
&\quad -A \bigl[\varPhi(C_1)-\varPhi(A) \bigr]+L_1
\bigl[\varPhi(C_2)-\varPhi(C_1)\bigr]+B\bigl[\varPhi(B)-
\varPhi(C_2) \bigr]. \label{17d}
\end{align}
\end{example}

\section{The WLSI for the WF close to a constant}

\begin{proposition}
Let $d=1$, $X_i\sim{}$N$(\mu_i,\sigma_i^2), i=1,2$ be independent and
$X=X_1+X_2\sim{}$N$(\mu_1+\mu_2, \sigma_1^2
+\sigma_2^2)$. Suppose that WF $x\to\phi(x)$ is twice continuously
differentiable and
\begin{equation}
\bigl\llvert\phi''(x) \bigr\rrvert\leq\epsilon\phi(x),
\qquad
\bigl\llvert\phi(x)-\bar\phi \bigr\rrvert\leq\xch{\epsilon,}{\epsilon} \label{18}
\end{equation}
where $\epsilon>0$ and $\bar\phi >0$ are constants. Then there exists
$\epsilon_0>0$ such that for any WF
$\phi$ satisfying (\ref{18}) with $0<\epsilon<\epsilon_0$
WLSI holds true. Hence, checking of the WEPI is reduced to condition
(\ref{14}).
\end{proposition}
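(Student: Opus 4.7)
The plan is a perturbation argument around the constant weight $\phi\equiv\bar\phi$. After the obvious rescaling $\phi\mapsto\phi/\bar\phi$, I would normalize $\bar\phi=1$, in which case (\ref{18}) reads $|\phi-1|\le\epsilon$ and $|\phi''|\le\epsilon\phi$. For $\phi\equiv1$, the WLSI is the classical Lieb splitting inequality for Gaussians, and the specific $\alpha$ defined in (\ref{13a}) is exactly the Gaussian-optimal angle $\cos^2\alpha_0=\sigma_1^2/\sigma^2$, $\sin^2\alpha_0=\sigma_2^2/\sigma^2$ (with $\sigma^2=\sigma_1^2+\sigma_2^2$), for which $Y_1,Y_2$ are both $\mathrm{N}(\cdot,\sigma^2)$ and the inequality (\ref{15}) reduces to equality. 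The task is therefore to show that this equality is robust, turning into the correct inequality when $\phi$ is perturbed within (\ref{18}).

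The key computational tool is to reduce the Gaussian WDE to an elementary expression. Applying the Gaussian integration-by-parts identity $\bE[(X-\mu)^2\phi(X)]=\sigma^2\bE\phi(X)+\sigma^4\bE\phi''(X)$ for $X\sim\mathrm{N}(\mu,\sigma^2)$ inside (\ref{eq:WDEGa}) (at $d=1$) yields
$h^\rw_\phi(X)=\bE\phi(X)\cdot h(X)+\tfrac{\log\re}{2}\sigma^2\bE\phi''(X)$,
and analogously for $X_1,X_2$. Under (\ref{18}) the deviations $|\bE\phi(X)-1|$, $|\bE\phi(X_i)-1|$ and the second-derivative averages $\bE\phi''(X),\bE\phi''(X_i)$ are all $O(\epsilon)$; substituting these expressions into (\ref{13a}) shows $\alpha-\alpha_0=O(\epsilon)$ as well. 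I would then plug into the WLSI (\ref{15}), using the conversion (\ref{31}) to eliminate $h^\rw_{\phi_c}(Y_1),h^\rw_{\phi_s}(Y_2)$, obtaining an explicit inequality in the variables $a=\bE\phi(X)$, $a_i=\bE\phi(X_i)$, $c,s$, $\bE\phi''(\cdot)$, and $\sigma_i^2$.

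A useful structural simplification is that the choice of $\alpha$ in (\ref{13a}) is precisely the critical point of the LHS of (\ref{15}) regarded as a function of $\alpha$ at fixed $\phi$ (a short direct calculation: differentiating $c^2h^\rw_\phi(X_1)+s^2h^\rw_\phi(X_2)-c^2a_1\log c-s^2a_2\log s$ in $c^2$ under $c^2+s^2=1$ reproduces (\ref{13a})). The envelope theorem therefore kills the $\delta\alpha$-contribution in a first-order expansion, and the second-order term in $\alpha-\alpha_0$ contributes a non-negative Lieb-type gap of size $\epsilon^2$ coming from the strict concavity underlying (\ref{eq11}). The first-order variation of $\mathrm{RHS}-\mathrm{LHS}$ in $\phi$ then collapses to an explicit combination of $\bE(\phi-1)(X)-c_0^2\bE(\phi-1)(X_1)-s_0^2\bE(\phi-1)(X_2)$ and the analogous combination of $\sigma^2\bE\phi''(X)$ against $c_0^2\sigma_1^2\bE\phi''(X_1)+s_0^2\sigma_2^2\bE\phi''(X_2)$.

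The main obstacle, and the place where the second half of (\ref{18}) enters essentially, is to conclude that the net sign of $\mathrm{RHS}-\mathrm{LHS}$ is correct for $0<\epsilon<\epsilon_0$. The first-order remainder above has no obvious sign on its own: the constraint $|\phi''|\le\epsilon\phi$ must be used to tie the second-derivative moments to the zeroth-order ones (again via the Stein relation applied on the scale of $\phi''$ itself), allowing the $\epsilon^2$ Lieb gap to dominate any $\epsilon$-sized residual. Carrying out this bookkeeping gives a threshold $\epsilon_0=\epsilon_0(\sigma_1,\sigma_2,\bar\phi)$ below which WLSI is guaranteed, which is exactly what the proposition claims; the WEPI then follows immediately once (\ref{14}) is additionally imposed, as in the proof of the WEPI.
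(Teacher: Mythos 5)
Your setup agrees with the paper's: both arguments perturb around the constant weight $\bar\phi$, both use Stein's identity (\ref{24}) to convert the second-moment terms $\bE[Z^2\phi(Z)]$ into $\sigma_Z^2\bE\phi(Z)+\sigma_Z^4\bE\phi''(Z)$, and both reduce the problem to showing that a concavity gap in the constant-weight version of (\ref{17}) absorbs the perturbation terms. The difficulty is entirely in the last step, and there your argument does not close. You correctly observe that for $\phi\equiv\bar\phi$ the angle (\ref{13a}) becomes the Gaussian-optimal one, $\cos^2\alpha=\sigma_1^2/(\sigma_1^2+\sigma_2^2)$, at which the splitting inequality (\ref{15}) holds with equality; consequently the Lieb gap at your disposal is only of order $(\alpha-\alpha_{\mathrm{opt}})^2=O(\epsilon^2)$, while the first-order variation in $\phi$ is, as you yourself note, an $O(\epsilon)$ quantity of no definite sign. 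Your concluding claim that the ``$\epsilon^2$ Lieb gap dominates any $\epsilon$-sized residual'' is therefore backwards: for small $\epsilon$ it is the residual that dominates the gap. The hypothesis $|\phi''|\le\epsilon\phi$ lets you bound $\bE\phi''(Z)$ by $O(\epsilon)$, but that only confirms the residual is $O(\epsilon)$; no mechanism is supplied that either improves it to $O(\epsilon^2)$ or shows it is bounded below by $-C\epsilon^2$. This is a genuine gap, and it is exactly the ``bookkeeping'' your proposal defers.

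It is worth noting that the paper closes the argument differently at precisely this point: in (\ref{28})--(\ref{30}) it asserts that the constant-weight inequality holds with a margin $\delta$ which tends to a limit $\delta_0>0$ as $\epsilon\downarrow0$, and then bounds each of the six perturbation terms in (\ref{30}) by $\delta/6$; with a margin bounded below by a positive constant, $O(\epsilon)$ residuals are indeed harmless. But that assertion requires $\alpha$ to stay bounded away from the optimal splitting angle as $\epsilon\to0$, which sits in tension with your (correct) computation that $\alpha\to\alpha_{\mathrm{opt}}$ and (\ref{29}) saturates in that limit. So your more careful order-counting does not merely leave your own proof incomplete; it isolates the step on which any proof of this proposition must do real work. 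To repair the argument you would need either to show that the first-order functional $\bE\psi(X)-\cos^2\alpha_{\mathrm{opt}}\,\bE\psi(X_1)-\sin^2\alpha_{\mathrm{opt}}\,\bE\psi(X_2)$ (together with its $\phi''$ companion coming from Stein's formula) is bounded below by $-C\epsilon^2$ for all admissible $\psi=\phi-\bar\phi$, or to exhibit a genuinely order-one margin elsewhere; neither is done in your proposal.
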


For a RV $\mathbf Z$, $\gamma>0$ and independent Gaussian RV $\mathbf
N\sim{}$N$(\mathbf0,{\bf I}_d)$ define
\begin{equation}
M(\mathbf Z;\gamma)=\bE \bigl[\bigl\| \mathbf Z-\bE[\mathbf Z\vert\mathbf Z\sqrt{
\gamma}+\mathbf N]\bigr\|^2 \xch{\bigr],}{\bigr]}
\end{equation}
where $\|.\|$ stands for the Euclidean norm. According to \cite{VG,KS2}
the differential entropy
\begin{equation}
h(\mathbf Z)=h(\mathbf N)+\frac{1}{2}\int_0^{\infty}
\bigl[M(\mathbf Z;\gamma)-{\bf1}_{\{
\gamma<1\}} \bigr]{\rm d}\gamma.
\label{19}
\end{equation}
For $\mathbf Z=\mathbf Y_1, \mathbf Y_2, \mathbf X_1+\mathbf X_2$
assume the following conditions
\begin{equation}
\bE \bigl[ \bigl\llvert\log f_\mathbf Z(\mathbf Z) \bigr\rrvert \bigr]<
\infty, \bE \bigl[\|\mathbf Z\|^2 \bigr]<\infty \label{20}
\end{equation}
and the uniform integrability: for independent $\mathbf N,\mathbf
N'\sim{}$N$(\mathbf0,\mathbf I)$ and any $\gamma>0$ there exist an
integrable function
$\xi(\mathbf Z,\mathbf N)$ such that
\begin{equation}
\biggl\llvert\log\bE \biggl[f_\mathbf Z \biggl(\mathbf Z+
\frac{\mathbf N-\mathbf
N'}{\sqrt{\gamma}}\vert\mathbf Z,\mathbf N \biggr) \biggr] \biggr\rrvert\leq\xi(
\mathbf Z,\mathbf N). \label{20a}
\end{equation}

\begin{theorem}
Let $d=1$ and assume conditions (\ref{20}), (\ref{20a}). Let $\gamma_0$
be a point of continuity of $M(Z;\gamma), Z=Y_1, Y_2, X_1+X_2$.
Suppose that there exists $\delta>0$ such that
\begin{equation}
M(X_1+X_2;\gamma_0)\geq M(Y_1,
\gamma_0) (\cos\alpha)^2+M(Y_2;\gamma
_0) (\sin\alpha)^2+\delta. \label{21}
\end{equation}
Suppose that for some $\bar\phi >0$ the WF satisfies
\begin{equation}
\bigl\llvert\phi(x)-\bar\phi \bigr\rrvert<\epsilon. \label{22}
\end{equation}
Then there exists $\epsilon_0=\epsilon_0(\gamma_0,\delta, f_1,f_2)$
such that for any WF satisfying
(\ref{22}) with $\epsilon<\epsilon_0$ the WLSI holds true.
\end{theorem}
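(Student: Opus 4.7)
The strategy is to perturb the WLSI (\ref{15}) around the constant-weight baseline $\phi \equiv \bar\phi$: at $\phi \equiv \bar\phi$ the WLSI reduces, after dividing by $\bar\phi$, to Lieb's classical splitting inequality, and the I-MMSE identity (\ref{19}) together with the strict gap assumption (\ref{21}) upgrades this to a genuinely positive inequality of size $O(1)$, which will dominate the $O(\epsilon)$ perturbation produced by the non-constant part of $\phi$.

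First I would set $\psi(x) := \phi(x) - \bar\phi$ with $\|\psi\|_\infty < \epsilon$ by (\ref{22}); then $|\psi_c|, |\psi_s| < \epsilon$ as well, where $\psi_c(y) = \psi(y\cos\alpha)$ and $\psi_s(y) = \psi(y\sin\alpha)$. Each of the three weighted entropies appearing in (\ref{15}) then splits additively:
\[
h^\rw_\phi(Z) = \bar\phi\, h(Z) + R(Z),\qquad |R(Z)| \leq \epsilon\, \bE\bigl[|\log f_Z(Z)|\bigr],
\]
and the errors $R(X_1+X_2), R(Y_1), R(Y_2)$ are all finite by condition (\ref{20}).

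Second, I would apply (\ref{19}) to $X_1+X_2$, $Y_1$ and $Y_2$. Since $(\cos\alpha)^2 + (\sin\alpha)^2 = 1$, the $h(\mathbf{N})$ contributions and the $\mathbf{1}_{\{\gamma<1\}}$ contributions cancel, leaving
\[
h(X_1+X_2) - (\cos\alpha)^2 h(Y_1) - (\sin\alpha)^2 h(Y_2) = \frac{1}{2}\int_0^\infty G(\gamma)\,{\rm d}\gamma,
\]
where $G(\gamma) := M(X_1+X_2;\gamma) - (\cos\alpha)^2 M(Y_1;\gamma) - (\sin\alpha)^2 M(Y_2;\gamma) \geq 0$ is the standard Lieb integrand. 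By (\ref{21}) $G(\gamma_0) \geq \delta$, and by the continuity hypothesis at $\gamma_0$ there is a neighborhood $(\gamma_0 - \eta, \gamma_0 + \eta)$ on which $G \geq \delta/2$; consequently the above difference is at least $\eta\delta/2$. Combining with the first step yields
\[
h^\rw_\phi(X_1+X_2) - (\cos\alpha)^2 h^\rw_{\phi_c}(Y_1) - (\sin\alpha)^2 h^\rw_{\phi_s}(Y_2) \;\geq\; \frac{\bar\phi\,\eta\delta}{2} - \epsilon K,
\]
with $K := \bE[|\log f_{X_1+X_2}(X_1+X_2)|] + \bE[|\log f_{Y_1}(Y_1)|] + \bE[|\log f_{Y_2}(Y_2)|] < \infty$. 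Setting $\epsilon_0 := \bar\phi\,\eta\delta/(2K)$ completes the argument for $\epsilon < \epsilon_0$.

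The main obstacle is that $\alpha$ depends on $\phi$ through (\ref{13a}), so the rescaled variables $Y_1, Y_2$, their MMSE curves $M(Y_i;\cdot)$, the continuity width $\eta$, and the constant $K$ all drift with $\phi$. The bound (\ref{22}) keeps $\alpha$ in a small neighborhood of its baseline value at $\phi \equiv \bar\phi$, but to conclude one must show, using (\ref{20})--(\ref{20a}) together with continuity of the MMSE in the source distribution, that $\eta$ and $K$ can be taken uniform on a small ball of weight functions around $\bar\phi$; only then does the final $\epsilon_0$ depend solely on $\gamma_0, \delta, f_1, f_2$ as claimed. Making this uniformity quantitative is the delicate part of the argument.
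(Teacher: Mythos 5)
Your proposal follows essentially the same route as the paper's proof: take the constant-weight Lieb splitting inequality, strengthen it by a term of order $\delta$ obtained by integrating the representation (\ref{19}) over a neighbourhood of the continuity point $\gamma_0$ where (\ref{21}) makes the integrand strictly positive, and then absorb the $O(\epsilon)$ perturbation coming from $\phi-\bar\phi$ (the paper's bound $h^\rw_{\varphi^*}(Z)<c_1\epsilon$ with $\varphi^*=|\phi-\bar\phi|$ is exactly your estimate $|R(Z)|\le\epsilon\,\bE[|\log f_Z(Z)|]$ via condition (\ref{20})). The uniformity issue you flag at the end --- that $\alpha$, hence $Y_1,Y_2$, $\eta$ and $K$, drift with $\phi$ --- is a genuine subtlety, but the paper's own proof passes over it silently, so your version is if anything the more careful one.
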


\begin{proof} For a constant WF $\bar\phi $, the following inequality is
valid (see \cite{KS2}, Lemma~4.2 or \cite{VG},
Eqns (9) and (10))
\begin{equation}
(\cos\alpha)^2h^\rw_{\bar\phi }(Y_1)+(\sin
\alpha)^2h^\rw_{\bar
\phi }(Y_2)\leq
h^\rw_{\bar\phi }(Y_1\cos\alpha+Y_2\sin
\alpha). \label{22a}
\end{equation}
However, in view of Theorem~4.1 from \cite{KS2}, the representation
(\ref{19}) and inequality (\ref{21}) imply under conditions (\ref{20})
and (\ref{20a}) a stronger inequality
\begin{equation}
(\cos\alpha)^2h^\rw_{\bar\phi }(Y_1)+(\sin
\alpha)^2h^\rw_{\bar
\phi
}(Y_2)+c_0
\delta\leq h^\rw_{\bar\phi }(Y_1\cos
\alpha+Y_2\sin \alpha). \label{22b}
\end{equation}
Here $c_0>0$ and the term of order $\delta$ appears from integration in
(\ref{19}) in a neighbourhood of the continuity point $\gamma_0$.
Define $\varphi^*(x)=|\phi(x)-\bar\phi |$. It is easy to check that
\begin{equation}
h^\rw_{\varphi^*}(Z)<c_1\epsilon,
\quad
Z=X_1, X_2, X_1+X_2. \label{22c}
\end{equation}
From ({\ref{22b}}) and (\ref{22c}) we obtain that for $\epsilon$
small enough
\begin{equation}
(\cos\alpha)^2h^\rw_{\phi}(Y_1)+(\sin
\alpha)^2h^\rw_{\phi
}(Y_2)\leq
h^\rw_{\phi}(Y_1\cos\alpha+Y_2\sin
\alpha), \label{22d}
\end{equation}
i.e., the WLSI holds true.
\end{proof}

As an example, consider the case where RVs $X_1, X_2$ are normal and WF
$\phi\in C^2$.

\begin{proposition}
Let RVs $X_i\sim{}$N$(\mu_i,\sigma_i^2), i=1,2$ be independent, and
$X=X_1+X_2\sim{}$N$(\mu_1+\mu_2, \sigma_1^2+\sigma_2^2)$.
Suppose that WF $x\in\bbR\to\phi(x)\geq0$
is twice contiuously
differentiable and slowly varying in the sense that $\forall x$,
\begin{equation}
\bigl\llvert\phi''(x) \bigr\rrvert\leq\epsilon\phi(x),
\qquad
\bigl\llvert\phi(x)-\bar\phi \bigr\rrvert<\xch{\epsilon,}{\epsilon} \label{23}
\end{equation}
where $\epsilon>0$ and $\bar\phi >0$ are constants. Then there exists
$\epsilon_0=\epsilon_0(\mu_0,\mu_1,\sigma_0^2,\sigma_2^2)>0$
such that for any $0<\epsilon\leq\epsilon_0$, the WLSI (\ref{15}) with
the WF $\phi$ holds true.
\end{proposition}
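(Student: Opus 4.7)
The plan is to verify the hypotheses of the preceding theorem (the MMSE-based criterion) directly for the Gaussian data, using the closed-form Gaussian WDE in \eqref{eq:WDEGa}. Conditions (\ref{20}) and (\ref{20a}) are immediate: for each of $Y_1$, $Y_2$ and $X_1+X_2$ the log-density is quadratic and all Gaussian moments are finite, so (\ref{20}) is a routine moment estimate and (\ref{20a}) follows from pointwise Gaussian density bounds.

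The substantive step is to exhibit $\gamma_0$ and $\delta>0$ for which (\ref{21}) holds. Since $M(Z;\gamma)=\sigma^2/(1+\gamma\sigma^2)$ for $Z\sim N(\mu,\sigma^2)$, the left-hand side of (\ref{21}) becomes an explicit rational function $g(\alpha)$ of the splitting angle. A short calculation shows that $g$ and $g'$ both vanish at the ``balanced'' angle $\alpha_0$ with $\tan\alpha_0=\sigma_2/\sigma_1$, while $g''(\alpha_0)>0$. Because $\alpha$ defined through (\ref{13a}) coincides with $\alpha_0$ only when $\phi$ is exactly constant, the deficit $\delta=g(\alpha(\phi))$ is positive but of order $(\alpha-\alpha_0)^2=O(\epsilon^2/\bar\phi^2)$ — not $O(\epsilon)$ as one might hope.

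The main obstacle, and the place where the assumption $|\phi''|\le\epsilon\phi$ does real work, is balancing this $O(\epsilon^2)$ gain against the error $c_1\epsilon$ in (\ref{22c}), which is na\"{\i}vely linear in $\epsilon$. Writing $\psi=\phi-\bar\phi$, the perturbation $h^\rw_\psi(Z)$ for a Gaussian $Z$ reduces via \eqref{eq:WDEGa} to linear combinations of $\bE\psi(Z)$ and $\bE[\psi(Z)(Z-\mu)^2]$; integrating by parts twice against the Gaussian density extracts a factor $\psi''$, so that $|\psi''|\le\epsilon(\bar\phi+\epsilon)$ makes the oscillating part of $\psi$ contribute only at order $\epsilon^2$. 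The constant part of $\psi$ cancels exactly between the two sides of the WLSI, because at $\phi\equiv\bar\phi$ the Gaussian Lieb splitting already holds with equality at $\alpha_0$ (as can be seen directly from the explicit formulas for $h^\rw_{\bar\phi}(Y_1)$, $h^\rw_{\bar\phi}(Y_2)$ and $h^\rw_{\bar\phi}(X)$). Once both the error term in (\ref{22c}) and the deficit $\delta$ are of matched order $\epsilon^2$, the preceding theorem applies, and the constants in these moment bounds read off $\epsilon_0=\epsilon_0(\mu_1,\mu_2,\sigma_1^2,\sigma_2^2)$.
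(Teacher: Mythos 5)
You take a genuinely different route from the paper. The paper's proof never invokes the MMSE-based theorem with conditions (\ref{20})--(\ref{21}): it writes the WLSI in the explicit Gaussian form (\ref{17}), first establishes the constant-WF version (\ref{28}) with an additive slack $\delta$ obtained from strict concavity of $y\mapsto\log y$ (inequality (\ref{29})) together with the exact variance identity (\ref{30a}), and then reduces the general case to showing that each of the six perturbation terms in (\ref{30}) has absolute value $<\delta/6$, using $|\phi-\bar\phi|<\epsilon$ for the terms $\bE[\phi(Z)-\bar\phi]$ and Stein's formula (\ref{24}) with $|\phi''|\le\epsilon\phi$ for the terms $\bE[Z^2(\phi(Z)-\bar\phi)]$. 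Your computation of $M(Z;\gamma)=\sigma^2/(1+\gamma\sigma^2)$ and the Jensen argument for (\ref{21}) are correct as far as they go, but they are not what the paper does.

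The genuine gap is your claim that the perturbation is $O(\epsilon^2)$. Write $\psi=\phi-\bar\phi$ and expand each weighted entropy by the Gaussian formula: the $\psi$-contribution to the difference of the two sides of (\ref{15}) is $h(X)\bE\psi(X)-\cos^2\alpha\, h(Y_1)\bE\psi(X_1)-\sin^2\alpha\, h(Y_2)\bE\psi(X_2)$ plus Stein remainders of the form $\tfrac{1}{2}\sigma^2\bE\psi''(\cdot)$. The double integration by parts extracts $\psi''$ only from the quadratic moments; it leaves behind the terms $\bE\psi(Z)$, which are expectations of one and the same function under three \emph{different} Gaussian laws and therefore do not cancel beyond $O(\epsilon)$. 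There is no "constant part of $\psi$" whose removal makes the rest contribute at order $\epsilon^2$: take $\psi$ equal to $\epsilon/2$ on the bulk of the law of $X_1$ and to $0$ on the bulk of the laws of $X_2$ and $X$ (compatible with (\ref{23}) when $\mu_2-\mu_1$ is large compared to $\sigma_1,\sigma_2$), and the combination above is of order $\epsilon$ with a sign you do not control. So your argument ends up weighing an $O(\epsilon^2)$ Jensen gain against an $O(\epsilon)$ error, and it does not close. Be aware that the paper's own proof meets the same obstruction and resolves it by asserting that $\delta$ in (\ref{28})--(\ref{29}) tends to a positive limit $\delta_0>0$ as $\epsilon\downarrow0$; your (correct) observation that $\alpha$ tends to the balanced angle $\tan\alpha=\sigma_2/\sigma_1$, at which the Gaussian splitting holds with equality, shows this is exactly the delicate point. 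To repair your argument you would need either a lower bound on the deficit in (\ref{21}) that dominates the linear-in-$\epsilon$ perturbation, or additional hypotheses forcing the $\bE\psi(Z)$ terms to cancel to higher order.
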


\begin{proof} Let $\alpha$ be as in (\ref{13a}); to check (\ref{15}),
we use Stein's formula: for $Z\sim{}$N$(0,\sigma^2)$
\begin{equation}
\bE \bigl[Z^2\phi(Z) \bigr]=\sigma^2\bE \bigl[\phi(Z)
\bigr]+\sigma^4\bE \bigl[\phi''(Z)
\bigr]. \label{24}
\end{equation}
Owing the inequality $|\phi(x)-\bar\phi |<\epsilon$ we have
\begin{align}
\alpha<\alpha_0&= \tan^{-1} \bigl( \exp \bigl((\bar\phi + \epsilon )^2\bigl[h_+(X_2)-h_{-}(X_1)\bigr]\nonumber\\[6pt]
&\quad -(\bar\phi -\epsilon)^2\bigl[h_+(X_1)-h_{-}(X_2)\bigr] \bigr)  \bigr). \label{25}
\end{align}
Here
\begin{equation}
h_{\pm}(X_i)=-\bE \bigl[{\B1} \bigl(X_i\in
A^i_{\pm} \bigr)\log f^{No}_{X_i}(X_i)
\bigr], \quad i=1,2. \label{36}
\end{equation}
and
\begin{equation}
A^i_+= \bigl\{x\in\bR: f^{No}_i(x)<1 \bigr\},
\qquad
A^i_{-}= \bigl\{x\in\bR: f^{No}_i(x)>1\bigr\},
\quad
i=1,2. \label{27}
\end{equation}
Evidently, under conditions $|\phi'(x)|, |\phi''(x)|<\epsilon\phi(x)$
we have that
$\alpha_0<\frac{\pi}{2}-\epsilon$ and $0<\epsilon< (\sin\alpha
)^2, (\cos
\alpha)^2<1-\epsilon<1$.
We claim that inequality (\ref{17}) is satisfied with $\phi$ replaced
by $\bar\phi $ and added $\delta>0$:
\begin{align}
&\log \bigl[2\pi\bigl(\sigma_1^2+\sigma_2^2
\bigr) \bigr]\bar\phi +\frac{\log e}{\sigma_1^2+\sigma_2^2}\bE\bigl[X^2\bigr]\bar\phi
\nonumber
\\[6pt]
&\quad \geq(\cos\alpha)^2 \biggl[\log \biggl(\frac{2\pi\sigma_1^2}{(\cos\alpha)^2}
\biggr) \biggr]\bar\phi +\frac{(\cos\alpha)^2\log e}{\sigma_1^2}\bE\bigl[X_1^2
\bigr]\bar\phi
\nonumber
\\[6pt]
&\qquad +(\sin\alpha)^2 \biggl[\log \biggl(\frac{2\pi\sigma_2^2}{(\sin\alpha)^2}\biggr)
\biggr]\bar\phi +\frac{(\sin\alpha)^2\log e}{\sigma_2^2}\bE\bigl[X_2^2\bigr]
\bar\phi+\delta. \label{28}
\end{align}
Here $\delta>0$ is calculated through $\epsilon$ and increases to a
limit $\delta_0>0$ as $\epsilon\downarrow0$.
Indeed, strict concavity of $\log y$ for $y\in[0,\frac{2\pi\sigma
_1^2}{(\cos\alpha)^2}\vee\frac{2\pi\sigma_2^2}{(\sin\alpha)^2}]$
implies that
\begin{align}
\log \bigl[2\pi\bigl(\sigma_1^2+\sigma_2^2
\bigr) \bigr]&\geq(\cos\alpha)^2 \biggl[\log \biggl(\frac{2\pi\sigma_1^2}{(\cos\alpha)^2}
\biggr) \biggr]
\nonumber
\\[6pt]
&\quad +(\sin\alpha)^2 \biggl[\log \biggl(\frac{2\pi\sigma_2^2}{(\sin\alpha)^2}\biggr)
\biggr]+\delta. \label{29}
\end{align}
On the other hand,
\begin{equation}
\frac{1}{\sigma_1^2+\sigma_2^2}\bar\phi \bE \bigl[X^2 \bigr]=\frac{ (\cos
\alpha
)^2}{\sigma_1^2}
\bar \phi \bE \bigl[X_1^2 \bigr]+\frac{(\sin\alpha)^2}{\sigma
_2^2}\bar
\phi \bE \bigl[X_2^2 \bigr]. \label{30a}
\end{equation}
Combining (\ref{29}) and (\ref{30a}) one gets (\ref{28}).
Now, to check (\ref{15}) with WF $\phi$, in view of (\ref{28}) it
suffices to verify
\begin{align}
&\log \bigl[2\pi\bigl(\sigma_1^2+\sigma_2^2
\bigr) \bigr] \bigl[\bE\phi(X)-\bar\phi\bigr]+\frac{\log e}{\sigma_1^2+\sigma_2^2}\bE
\bigl[X^2\bigl(\phi(X)-\bar\phi \bigr)\bigr]
\nonumber
\\
&\quad -(\cos\alpha)^2 \biggl[\log \biggl(\frac{2\pi\sigma_1^2}{(\cos\alpha)^2}\biggr)
\biggr] \bigl[\bE\phi(X_1)-\bar\phi \bigr]+\frac{(\cos\alpha)^2\log e}{\sigma_1^2}\bE
\bigl[X_1^2\bigl(\phi(X_1)-\bar\phi \bigr)
\bigr]
\nonumber
\\
&\quad -(\sin\alpha)^2 \biggl[\log \biggl(\frac{2\pi\sigma_2^2}{(\sin\alpha)^2}\biggr)
\biggr] [\bE\phi(X_2)-\bar\phi )\,{+}\frac{(\sin\alpha)^2\log e}{\sigma_2^2}\bE
\bigl[X_2^2\bigl(\phi(X_2)\,{-}\,\bar\phi \bigr)
\bigr]{<}\,\delta. \label{30}
\end{align}
We check (\ref{30}) by a brute force, claiming that each term in (\ref
{30}) has the absolute value
$<\delta/6$ when $\epsilon$ is small enough. For the terms containing
$\bE[\phi(Z)-\bar\phi ]$, $Z=X,X_1X_2$,
this follows since $|\phi-\bar\phi |<\epsilon$.
For the terms containing factor $\bE[Z^2(\phi(Z)-\bar\phi )]$, we use
Stein's formula (\ref{24}) and the condition that
$|\phi''(x)|\leq\epsilon\phi(x)$.
\end{proof}

Similar assertions can be established for other examples of PDFs
$f_1(x)$ and $f_2(x)$, i.e. uniform, exponential, Gamma,
Cauchy, etc.

\section{A weighted Fisher information inequality}

Let $\mathbf Z= (\mathbf X,\mathbf Y)$ be a pair of independent RVs
$\mathbf X$ and $\mathbf Y\in\bbR
^d$, with sample values
$\mathbf z= (\mathbf x, \mathbf y) \in\bbR^d \times\bbR^d$ and
marginal PDFs $f_1(\mathbf x
,\utheta), f_2(\mathbf y,\utheta)$, respectively.
Let $f_{\mathbf Z|\mathbf X+\mathbf Y}(\mathbf x,\mathbf y|\bu)$ stand
for the conditional PDF as
\begin{equation}
f_{\mathbf Z\llvert\mathbf X+\mathbf Y}(\mathbf x,\mathbf y\rrvert\bu)=\frac
{f_1(\mathbf x)f_2(\mathbf y)\B1(\mathbf x+\mathbf y=\bu
)}{\int_{\bbR^n}f_1(\mathbf v)f_2(\bu-\mathbf v)\rd\mathbf v}.
\label{70}
\end{equation}
Given a WF $\mathbf z= (\mathbf x, \mathbf y) \in\bbR^d \times\bbR
^d \mapsto\phi
(\mathbf z) \geq0$, we employ the following reduced
WFs:
\begin{align}
\varphi(\bu)&=\int\phi(\mathbf v, \bu-\mathbf v)f_{\mathbf Z|\mathbf X+\mathbf Y}(\mathbf v, \bu-
\mathbf v)\rd\mathbf v,
\nonumber
\\
\vp_1(\mathbf x)&=\int\phi(\mathbf x+\mathbf y,\mathbf
y)f_2(\mathbf y)\rd\mathbf y, \vp_2(\mathbf y)=\int\phi(
\mathbf x, \mathbf x+\mathbf y)f_1(\mathbf x)\rd\mathbf x.
\label{71_2}
\end{align}
Next, let us introduce the matrices $M_{\phi}$ and $G_{\phi}$:
\begin{align}
M_{\phi}&=\int\phi(\mathbf x,\mathbf y)f_1(\mathbf
x)f_2(\mathbf y) \biggl(\frac{\partial\log f_1(\mathbf x)}{\partial\utheta} \biggr)^{\rT}
\biggl(\frac{\partial\log f_2(\mathbf x)}{\partial\utheta} \biggr)\B1\bigl(f_1(\mathbf
x)f_2(\mathbf y)>0\bigr)\rd\mathbf x\rd\mathbf y,
\nonumber
\\
G_{\phi}&= \bigl(J^\rw_{\varphi_1}(\mathbf X)
\bigr)^{-1} M_{\phi}\bigl(J^{\rw}_{\phi_2}(
\mathbf Y) \bigr)^{-1}. \label{72_2}
\end{align}

Note that for $\phi\equiv1$ we have $ M_{\phi}= G_{\phi}=0$ and the
classical Fisher information inequality emerges (cf. \cite{Z}).
Finally, we define
\begin{align}
\varXi&:=\varXi_{\varphi_1,\varphi_2}(\mathbf X, \mathbf Y)= M_{\phi}J^\rw_ {\varphi_1}(
\mathbf X)G_{\phi} (\mathbf I-M_{\phi}G_{\phi}
)^{-1}M_{\phi}\bigl[G_{\phi}J^\rw_{\varphi_2}(
\mathbf Y)G_{\phi}-J^\rw_{\varphi_1}(\mathbf X) \bigr]
\nonumber
\\
&\quad +G_{\phi} (\mathbf I-M_{\phi}G_{\phi}
)^{-1} M_{\phi}G_{\phi} J^\rw_{\vp_2}(
\mathbf Y) \bigl[M_{\phi}^{-1}- G_{\phi}
\bigr]-G_{\phi}J^\rw_{\varphi_2}(\mathbf
Y)G_{\phi}-G_{\phi}. \label{72_3}
\end{align}

\begin{theorem}[A weighted Fisher information inequality (WFII)]
Let $\mathbf X$ and $\mathbf Y$ be independent RVs.
Assume that $f^{(1)}_{\mathbf X}=\frac{\partial}{\partial\utheta
}f_{1}$ is
not a multiple of $f^{(1)}_{\mathbf Y}=\frac{\partial}{\partial
\utheta
}f_{2}$. Then
\begin{equation}
J^\rw_{\varphi}(\mathbf X+\mathbf Y)\leq (\mathbf
I-M_{\phi
}G_{\phi} ) \bigl[ \bigl(J^\rw_{\varphi_1}(
\mathbf X) \bigr)^{-1}+ \bigl(J^\rw_{\varphi
_2}(\mathbf
Y) \bigr)^{-1}-\varXi_{\varphi_1,\varphi_2}(\mathbf X, \mathbf Y)
\bigr]^{-1}. \label{90}
\end{equation}
\end{theorem}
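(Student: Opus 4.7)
The plan is to extend the classical Stam--Blachman derivation of the Fisher information inequality to the weighted matrix setting by combining a score-decomposition identity, a matrix Cauchy--Schwarz bound, optimization over an auxiliary matrix parameter $\Lambda$, and closing algebraic identities that absorb the coupling introduced by $\phi$.

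First, I would establish a score-decomposition identity for $\mathbf X+\mathbf Y$. From the convolution density $f_{\mathbf X+\mathbf Y}(\bu,\utheta)=\int f_1(\mathbf v,\utheta)f_2(\bu-\mathbf v,\utheta)\rd\mathbf v$, differentiating under the integral and using independence yields the conditional representation
$$
S(\mathbf X+\mathbf Y,\utheta)\;=\;\bE\bigl[\Lambda S(\mathbf X,\utheta)+(\mathbf I-\Lambda)\, S(\mathbf Y,\utheta)\bigm|\mathbf X+\mathbf Y\bigr]
$$
valid for every $m\times m$ matrix $\Lambda$. The freedom to choose $\Lambda$ reflects the underlying translation-type identity $\bE[S(\mathbf X,\utheta)\mid\mathbf X+\mathbf Y]=\bE[S(\mathbf Y,\utheta)\mid\mathbf X+\mathbf Y]$, and the hypothesis that $f_{\mathbf X}^{(1)}$ is not a multiple of $f_{\mathbf Y}^{(1)}$ ensures this parameter genuinely varies in a non-degenerate way.

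Next, insert this representation into the definition of $J^\rw_\varphi(\mathbf X+\mathbf Y)$. Since $\varphi(\bu)=\bE[\phi(\mathbf X,\mathbf Y)\mid\mathbf X+\mathbf Y=\bu]$ from \eqref{71_2}, the tower property rewrites $J^\rw_\varphi(\mathbf X+\mathbf Y)$ as $\bE[\phi(\mathbf X,\mathbf Y)\, S(\mathbf X+\mathbf Y,\utheta)\,S(\mathbf X+\mathbf Y,\utheta)^\rT]$, and a conditional matrix Cauchy--Schwarz (Jensen's inequality in the Loewner order) yields
$$
J^\rw_\varphi(\mathbf X+\mathbf Y)\;\leq\;\Lambda J^\rw_{\varphi_1}(\mathbf X)\Lambda^\rT+(\mathbf I-\Lambda) J^\rw_{\varphi_2}(\mathbf Y)(\mathbf I-\Lambda)^\rT+\Lambda M_\phi(\mathbf I-\Lambda)^\rT+(\mathbf I-\Lambda) M_\phi^\rT\Lambda^\rT,
$$
where the cross terms produce precisely the matrix $M_\phi$ of \eqref{72_2}, by virtue of the definitions of $\varphi_1,\varphi_2$.

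Finally, minimize the right-hand side over $\Lambda$. Setting the matrix derivative to zero gives a linear equation whose solution $\Lambda^\ast$ is expressible through $J^\rw_{\varphi_1}(\mathbf X)$, $J^\rw_{\varphi_2}(\mathbf Y)$ and $M_\phi$; substituting back collapses the bound to a closed matrix expression. The last step is to recognize this expression as $(\mathbf I-M_\phi G_\phi)\bigl[(J^\rw_{\varphi_1}(\mathbf X))^{-1}+(J^\rw_{\varphi_2}(\mathbf Y))^{-1}-\varXi\bigr]^{-1}$, using the definition $G_\phi=(J^\rw_{\varphi_1}(\mathbf X))^{-1}M_\phi(J^\rw_{\varphi_2}(\mathbf Y))^{-1}$ together with repeated Woodbury-type inverse identities. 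I expect the main obstacle to lie precisely here: the non-commutativity of the matrices forces careful tracking of left versus right factors, and reassembling the cross-term corrections into exactly the form of $\varXi$ given in \eqref{72_3} is where the bulk of the technical work sits. The classical limit $\phi\equiv 1$, under which $M_\phi=0$, $G_\phi=0$, $\varXi=0$, and \eqref{90} collapses to the Zamir form of the FII, supplies a useful sanity check at each stage.
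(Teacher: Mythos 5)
Your proposal follows the Stam--Blachman route (score decomposition plus conditional Jensen plus optimization over $\Lambda$), whereas the paper follows Zamir's data-processing argument: it treats $\BX+\BY$ as the image of $\BZ=(\BX,\BY)$ under $\tP=[1,1]$, represents the joint WFIM of $\BZ$ as the $2\times2$ block matrix with diagonal blocks $J^\rw_{\vp_1}(\BX)$, $J^\rw_{\vp_2}(\BY)$ and off-diagonal blocks $M_{\phi}$ (Corollary 4.8(iii) of [20]), inverts it by the Schur-complement formula, and then applies a weighted data-processing inequality $J^\rw_{\vp}(\tP\BZ)\leq\{\tP(J^\rw(\BZ))^{-1}\tP^{\rT}\}^{-1}$ imported from [20, Corollary 3.4(iii)]. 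The prefactor $(\mathbf I-M_{\phi}G_{\phi})$ and the correction $\varXi$ fall out of that block inversion; there is no $\Lambda$-optimization anywhere.

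Your route has a genuine gap at its central step. Writing $\varphi=\bE[\phi\mid\BX+\BY]$ and $S(\BX+\BY)=\bE[W\mid\BX+\BY]$ with $W=\Lambda S(\BX)+(\mathbf I-\Lambda)S(\BY)$, the tower property gives $J^\rw_{\varphi}(\BX+\BY)=\bE[\phi(\BX,\BY)\,\bE[W\mid\BX+\BY]\,\bE[W\mid\BX+\BY]^{\rT}]$, but since $\phi(\BX,\BY)$ is \emph{not} measurable with respect to $\BX+\BY$, conditional Jensen does not bound this by $\bE[\phi WW^{\rT}]$. The correct weighted conditional Cauchy--Schwarz controls $\bE[\phi W\mid\cdot]\bE[\phi W\mid\cdot]^{\rT}/\bE[\phi\mid\cdot]$, and $\bE[\phi W\mid\cdot]/\bE[\phi\mid\cdot]\neq\bE[W\mid\cdot]$ in general; this mismatch is precisely the source of the $M_{\phi}$, $G_{\phi}$, $\varXi$ structure, so the inequality you display after "a conditional matrix Cauchy--Schwarz" is unjustified, and your $\phi\equiv1$ sanity check cannot detect the problem because in that case the weight is trivially measurable and the step reduces to ordinary Jensen. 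Two further weaknesses: the identity $S(\BX+\BY,\utheta)=\bE[\Lambda S(\BX,\utheta)+(\mathbf I-\Lambda)S(\BY,\utheta)\mid\BX+\BY]$ "for every $\Lambda$" requires the separate translation identities $\bE[S(\BX,\utheta)\mid\BX+\BY]=\bE[S(\BY,\utheta)\mid\BX+\BY]$, which hold for a location parameter but not for the general families $f_1(\cdot,\utheta)$, $f_2(\cdot,\utheta)$ of the theorem (differentiating the convolution only yields the single identity with $S(\BX)+S(\BY)$); and the final matching of the optimized quadratic form with the asymmetric expression $(\mathbf I-M_{\phi}G_{\phi})[\cdots]^{-1}$ is exactly the part you defer, with no evidence it closes.
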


\begin{proof} We use the same methodology as in Theorem~1 from \cite
{Z}. Recalling Corollary~4.8,
(iii) in \cite{SSSK} substitute $\tP:= [1, 1]$. Therefore for
$\mathbf Z=
(\mathbf X,\mathbf Y)$, $J^\rw(\mathbf Z)$ is an $m \times m$ matrix
\begingroup
\abovedisplayskip=7.5pt
\belowdisplayskip=7.5pt
\begin{equation}
\bigl(\tJ^{\rm w}_{{\theta}}(\BZ) \bigr)^{-1}=\lleft(
\begin{array}{cc}
\tJ^{\rm w}_{\vp_1}(\BX) & M_{\phi} \\
M_{\phi} & \tJ^{\rm w}_{\vp_2}(\BY) \\
\end{array} %
 \rright)^{-1}.
\end{equation}
Next, we need the following well-known expression for the inverse of a
block matrix
\begin{equation}
\label{inverse} %
\begin{pmatrix}\BC_{11}&\BC_{21}\\ \BC_{12}&\BC_{22}
\end{pmatrix} %
^{-1}=
\begin{pmatrix} \BC_{11}^{-1}+\BD_{12}\BD_{22}^{-1}\BD_{21}&-\BD
_{12}\BD
_{22}^{-1}\\ -\BD_{22}^{-1}\BD_{21}&\BD_{22}^{-1}
\end{pmatrix}, %
\end{equation}
where
\begin{eqnarray*}
&\BD_{22}=\BC_{22}-\BC_{21}\BC_{11}^{-1}\BC_{12}, \qquad \BD_{12}=\BC _{11}^{-1}\BC_{12}, \qquad \BD_{21}=\BC_{21}\BC_{11}^{-1},&\\
&\BC_{11}=\tJ^{\rw}_{\vp_1}(\BX), \qquad \BC_{22}=\tJ^{\rw}_{\vp_2}(\BY ),\quad \hbox{and}\quad \BC_{12}=\BC_{21}=\xch{M_{\vp}.}{M_{\vp},}&
\end{eqnarray*}
By using the Schwarz inequality, we derive
\begin{equation}
M_{\vp}^2\leq\tJ^{\rm w}_{\vp_1}(\BX)\;
\tJ^{\rm
w}_{\vp
_2}(\BY),\quad \hbox{or } M_{\vp}\;
G_{\vp} \leq\BI,
\end{equation}
with equality iff $ f^{(1)}_{\BX}(\mathbf x)=\frac{\partial
\log
f_1(\mathbf x)}{\partial\utheta}\propto\frac{\partial\log
f_2(\mathbf y
)}{\partial\utheta}= f^{(1)}_{\BY}(\mathbf y)$.

Define
\begin{align}
\label{def:delta} \delta&:=\tJ^{\rm w}_{\vp_2}(\BY)-
\tM_{\vp} \bigl(\tJ^{\rm w}_{\vp_1}(\BX)
\bigr)^{-1}M_{\vp}=(\tI-M_{\vp}\;G_{\vp} )
\tJ^{\rm w}_{\vp_2}(\BY)
\nonumber
\\
&\Rightarrow\delta^{-1}= \bigl(\tJ^{\rm w}_{\theta_2}(\BY)
\bigr)^{-1}(\tI-M_{\vp}\;G_{\vp} )^{-1}.
\end{align}
Thus, owing to the (\ref{inverse}), particularly for $\tP=[1,1]$, we
can write
\begin{align}
\tP \bigl(\tJ^{\rm w}_{{\vp}}(\BZ) \bigr)^{-1}
\tP^T&= \bigl(\tJ^{\rm w}_{\vp_1}(\BX)
\bigr)^{-1}+ \bigl(\tJ^{\rm w}_{\vp_1}(\BX)
\bigr)^{-1} M_{\vp}\;\delta^{-1}\; M_{\vp}
\bigl(\tJ^{\rm w}_{\vp_1}(\BX) \bigr)^{-1}
\nonumber
\\
&\quad - \bigl(\tJ^{\rm w}_{\vp_1}(\BX) \bigr)^{-1}
M_{\vp}\;\delta^{-1}-\delta^{-1}\; M_{\vp}
\; \bigl(\tJ^{\rm w}_{\vp_1}(\BX)\bigr)^{-1}+
\xch{\delta^{-1}.}{\delta^{-1}}
\label{eq:2.10}
\end{align}
Substituting (\ref{def:delta}), in above expression, we have
\begin{align}
&\tP \bigl(\tJ^{\rm w}_{{\vp}}(\BZ) \bigr)^{-1}\tP^T\nonumber\\
&\quad = \bigl(\tJ^{\rm w}_{\vp_1}(\BX)\bigr)^{-1}+ G_{\vp} (\mathbf I- M_{\vp}\;G_{\vp} )^{-1} M_{\vp}\; \bigl(\tJ^{\rm w}_{\vp_1}(\BX) \bigr)^{-1}\nonumber\\
&\qquad - G_{\vp} (\mathbf I- M_{\vp}\; G_{\vp})^{-1}-\bigl(\tJ^{\rm w}_{\vp_2}(\BY)\bigr)^{-1} (\BI- M_{\vp}\; G_{\vp} )^{-1}M_{\vp}\; \bigl(\tJ^{\rm w}_{\vp_1}(\BX)\bigr)^{-1}\nonumber\\
&\qquad + \bigl(\tJ^{\rm w}_{\vp_2}(\BY) \bigr)^{-1} (\mathbf I- M_{\vp}\; G_{\vp} )^{-1}\nonumber\\
&\quad = \bigl\{\bigl(\tJ^{\rm w}_{\vp_1}(\BX) \bigr)^{-1}(\mathbf I-M_{\vp}\;G_{\vp} )\nonumber\\
&\qquad + G_{\vp} (\mathbf I-M_{\vp}\; G_{\vp})^{-1} M_{\vp}\bigl(\tJ^{\rm w}_{\vp_1}(\BX)\bigr)^{-1} (\mathbf I- M_{\vp}\; G_{\vp})-G_{\vp}\nonumber\\
&\qquad - \bigl(\tJ^{\rm w}_{\theta_2}(\BY)\bigr)^{-1}(\mathbf I- M_{\vp}\; G_{\vp})^{-1}\tM_{\vp} \bigl(\tJ^{\rm w}_{\vp_1}(\BX)\bigr)^{-1} (\tI- M_{\vp}\; G_{\vp} )\nonumber\\
&\qquad + \bigl(\tJ^{\rm w}_{\vp_2}(\BY) \bigr)^{-1}\bigr\} (\mathbf I-M_{\vp}\;G_{\vp} )^{-1}.
\label{92}
\end{align}
\endgroup
Consequently by simplifying (\ref{92}), one yields
\begin{align}
&\tP \bigl(\tJ^{\rm w}_{{\vp}}(\BZ) \bigr)^{-1}
\tP^T
\nonumber
\\
&\quad = \bigl\{ \bigl(\tJ^{\rm w}_{\vp_1}(\BX)
\bigr)^{-1}+ \bigl(\tJ^{\rm w}_{\vp_2}(\BY)
\bigr)^{-1}+\varXi_{\vp_1,\vp_2}(\BX,\BY) \bigr\}(\mathbf I-
M_{\vp}\;G_{\vp} )^{-1}. \label{93}
\end{align}
By using Corollary~3.4, (iii) from \cite{SSSK}, we obtain the property
claimed in (\ref{90}):
\begin{equation*}
\tJ^{\rm w}_{\vp}(\BX+\BY)\leq \bigl\{ \bigl[ \bigl(\tJ
^{\rm w}_{\vp
_1}(\BX) \bigr)^{-1}+ \bigl(
\tJ^{\rm w}_{\vp_2}(\BY) \bigr)^{-1}+\varXi
_{\vp
_1,\vp_2}(\BX,\BY) \bigr] (\mathbf I-M_{\vp}\;G_{\vp}
)^{-1} \bigr\}^{-1}.
\end{equation*}
This concludes the proof.
\end{proof}

\begin{proposition}
Consider additive RV $\BZ=\BX+\BN_{\BSigma}$, such that $\BN
_{\BSigma
}\sim{\rm N}(\mathbf0,\BSigma)$ and $\BN_{\BSigma}$ is independent
of $\BX$.
Introduce matrices
\begin{align}
V_{\vp}(\BX\vert\BZ)&=\bbE \bigl[\vp\; \bigl(\BX-\bbE[\BX\vert\BZ]
\bigr)^{\rT}\bigl(\BX-\bbE(\BX\vert\BZ)\bigr) \bigr],
\nonumber
\\
E_{\vp}&=\bbE \bigl[\vp\; \bigl(\BZ-\bbE [\BX|\BZ]
\bigr)^{\rT}\bigl(\BX-\bbE [\BX|\BZ ] \bigr) \bigr],
\quad
\overline{E}_{\vp}=E_{\vp}+ E_{\vp}^{\rT}.
\label{eq:V and E}
\end{align} %

The WFIM of RV $\BZ$ can be written as
\begin{equation}
\label{WFIM:Y} \tJ^{\rw}_{\vp}(\BZ)= \bigl(
\BSigma^{-1} \bigr)^{\rT} \bigl\{\bbE \bigl[\vp\;\BN
_{\BSigma}^{\rT}{\BN_{\BSigma}} \bigr]+\overline{E}_{\vp}-
V_{\vp
}(\BX|\BZ ) \bigr\} \BSigma^{-1}.
\end{equation}
\end{proposition}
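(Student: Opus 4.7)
The first step is to obtain an explicit formula for the score of $\BZ$. Writing the convolution
\begin{equation*}
f_\BZ(\mathbf z)=\int f_\BX(\mathbf x)\,g_\BSigma(\mathbf z-\mathbf x)\,\rd\mathbf x
\end{equation*}
with $g_\BSigma$ the density of N$(\mathbf0,\BSigma)$, differentiating under the integral sign, and using $\nabla\log g_\BSigma(\mathbf u)=-\mathbf u\,\BSigma^{-1}$ (row-vector convention), yields the MMSE–score relation
\begin{equation*}
\nabla\log f_\BZ(\BZ)=-\bbE[\BN_\BSigma\mid\BZ]\,\BSigma^{-1}.
\end{equation*}
Substituting into the WFIM definition \eqref{50} gives
\begin{equation*}
\tJ^\rw_\vp(\BZ)=(\BSigma^{-1})^{\rT}\bbE\bigl[\vp\,\bbE[\BN_\BSigma\mid\BZ]^{\rT}\bbE[\BN_\BSigma\mid\BZ]\bigr]\,\BSigma^{-1},
\end{equation*}
so the proposition reduces to a purely algebraic rewriting of the middle expectation.

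For that step I introduce the shorthand $T=\bbE[\BN_\BSigma\mid\BZ]=\BZ-\bbE[\BX\mid\BZ]$ and $W=\BX-\bbE[\BX\mid\BZ]$. Since $\BN_\BSigma=\BZ-\BX$, these satisfy $T=\BN_\BSigma+W$, equivalently $\BN_\BSigma=T-W$. Expanding
\begin{equation*}
T^{\rT}T=\BN_\BSigma^{\rT}\BN_\BSigma+\BN_\BSigma^{\rT}W+W^{\rT}\BN_\BSigma+W^{\rT}W,
\end{equation*}
and replacing $\BN_\BSigma$ by $T-W$ in the two cross terms gives $\BN_\BSigma^{\rT}W=T^{\rT}W-W^{\rT}W$ and $W^{\rT}\BN_\BSigma=W^{\rT}T-W^{\rT}W$.

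Applying $\bbE[\vp\,\cdot\,]$ termwise and matching with the definitions \eqref{eq:V and E}, the first contributes $\bbE[\vp\,\BN_\BSigma^{\rT}\BN_\BSigma]$, the pure $W^{\rT}W$ term contributes $V_\vp(\BX\mid\BZ)$, the first cross term contributes $E_\vp-V_\vp(\BX\mid\BZ)$, and the second contributes $E_\vp^{\rT}-V_\vp(\BX\mid\BZ)$ (using that $\bbE[\vp\,W^{\rT}T]$ is the transpose of $\bbE[\vp\,T^{\rT}W]=E_\vp$). Summing and recalling $\overline{E}_\vp=E_\vp+E_\vp^{\rT}$, the three $V_\vp$ pieces collapse to a single $-V_\vp(\BX\mid\BZ)$, producing $\bbE[\vp\,\BN_\BSigma^{\rT}\BN_\BSigma]+\overline{E}_\vp-V_\vp(\BX\mid\BZ)$ inside the $(\BSigma^{-1})^{\rT}\{\,\cdot\,\}\BSigma^{-1}$ sandwich, which is exactly \eqref{WFIM:Y}.

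The only non-mechanical ingredient is the score identity in the first paragraph, which is where any technical difficulty lives: one must justify the interchange of $\nabla_{\mathbf z}$ with the convolution integral. Because $g_\BSigma$ is smooth with rapidly decaying derivatives, this is routine under mild integrability of $\BX$ (for instance $\bbE\|\BX\|<\infty$) by dominated convergence. Once the score formula is in hand, the rest of the argument is forced by the linear identity $\BN_\BSigma=T-W$ linking the Gaussian noise to the conditional-mean residual, which I view as the conceptual heart of the result.
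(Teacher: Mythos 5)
The paper states this proposition without proof (consistent with the authors' announcement in the Introduction that proofs are sometimes deferred to the original sources), so there is no in-paper argument to compare against; your proof is correct and is the standard route. The score identity $\nabla\log f_{\BZ}(\BZ)=-\bbE[\BN_{\BSigma}\mid\BZ]\,\BSigma^{-1}$ is the Verd\'u--Guo MMSE--score relation for Gaussian perturbations, and your substitution $T=\BZ-\bbE[\BX\mid\BZ]$, $W=\BX-\bbE[\BX\mid\BZ]$, $\BN_{\BSigma}=T-W$ correctly collapses $\bbE[\vp\,T^{\rT}T]$ to $\bbE[\vp\,\BN_{\BSigma}^{\rT}\BN_{\BSigma}]+\overline{E}_{\vp}-V_{\vp}(\BX\mid\BZ)$, matching \eqref{WFIM:Y}. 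The one interpretive point worth making explicit is that $\tJ^{\rw}_{\vp}(\BZ)$ here must be read as the weighted Fisher information for a location parameter, i.e.\ the score in \eqref{50} is taken to be $\nabla_{\mathbf z}\log f_{\BZ}$; you adopt this reading implicitly, and it is the intended one given the surrounding de Bruijn/entropy-power context.
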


\section{The weighted entropy power is a concave function}\label{concave}
\def\gam{\gamma}
Let $\mathbf Z=\mathbf X+\mathbf Y$ and $\mathbf Y\sim{}$N$(\mathbf0,
\sqrt{\gamma}\mathbf I_d)$.
In the literature, several elegant proofs, employing the Fisher information
inequality or basic properties of mutual information, have been
proposed in order
to prove that the entropy power (EP) is a concave function of $\gam$
\cite{D,Vi}.
We are interested in the {\it weighted entropy power} (WEP) defined as follows:
\begin{equation}
{\rm N}^{\rw}_{\vp}(\BZ):={\rm N}^{\rw}_{\vp}(f_\mathbf
Z)=\exp \biggl\{\frac{2\;
\hw_{\vp}(\BZ)}{d\;\bbE[\vp(\BZ)]} \biggr\}. \label{94}
\end{equation}

Compute the second derivative of the WEP
\begin{align}
\frac{\rd^2}{\rd\gam^2}\exp \biggl\{\frac{2}{d}\frac{\hwphi(\BZ)}{\bbE[\phi(\BZ)]} \biggr
\}&= \exp \biggl\{\frac{2}{d}\frac{\hwphi(\BZ)}{\bbE[\phi(\BZ)]} \biggr\}
\nonumber
\\
&\quad \times \biggl[ \biggl(\frac{2}{d}\frac{\rd}{\rd\gam}
\frac{\hwphi(\BZ)}{\bbE[\phi(\BZ)]} \biggr)^2+ \biggl(\frac{2}{d}
\frac{\rd^2}{\rd\gam^2}\frac{\hwphi(\BZ)}{\bbE[\phi(\BZ)]} \biggr) \biggr]
\nonumber
\\
&=\exp \biggl\{\frac{2}{d}\frac{\hwphi(\BZ)}{\bbE[\phi(\BZ)]} \biggr\} \biggl[\bigl(
\varLambda(\gam)\bigr)^2+\frac{\rd}{\rd\gam}\varLambda(\gam) \xch{\biggr],}{\biggr]}
\label{95}
\end{align}
where
\begin{equation}
\varLambda(\gamma)=\frac{2}{d}\frac{\rd}{\rd\gamma}
\frac
{\hwphi(\BZ)}{\bbE[\phi(\BZ)]}. \label{96}
\end{equation}
In view of (\ref{95}) the concavity of the WEP is equivalent to the inequality
\begin{equation}
\frac{\rd}{\rd\gamma} \bigl(\varLambda(\gam) \bigr)^{-1}\geq1.
\label{97}
\end{equation}

In the spirit of the WEP, we shall
present a new proof of concavity of EP.
Regarding this, let us apply the WFII (\ref{90}) to $\phi\equiv1$.
Then a straightforward computation gives
\begin{equation}
\label{deriv}\frac{\rd}{\rd\gamma}\frac{d}{{\rm
tr}\;
J(\BZ)} \geq 1.
\end{equation}

\begin{theorem}[A weighted De Bruijn's identity]
Let $\BX\sim f_\mathbf X$ be a RV in $\bbR^n$, with a PDF $f_\mathbf
X\in C^2$. For
a standard Gaussian RV $\BN\sim{\rm N}(\mathbf0,\mathbf I_d)$
independent of $\BX
$, and given $\gamma>0$, define the RV
$\BZ=\BX+\sqrt{\gam}\BN$ with PDF $f_\BZ$. Let $\bV_r$ be the
$d$-sphere of radius $r$ centered at the origin and having surface
denoted by $\bS_r$. Assume that for given WF $\vp$ and $\forall\gam
\in
(0,1)$ the relations
\begin{equation}
\int f_{\BZ}(\mathbf x) \bigl\llvert\ln f_{\BZ}(\mathbf x)\bigr\rrvert\rd \mathbf x< \infty,
\qquad
\int \bigl\llvert\nabla\; f_\BZ(\mathbf y)\ln f_\BZ(\mathbf y) \bigr\rrvert\rd\mathbf y<\infty
\label{98a}
\end{equation}
and
\begin{equation}
\lim_{r\rightarrow\infty} \int_{\bbS
_r}\vp(\mathbf y )\log f_Z(\mathbf y)
\bigl(\nabla f_Z(\mathbf y) \bigr)\rd S_r=\xch{0}{0,}
\label{98b}
\end{equation}
are fulfilled. Then
\begin{equation}
\frac{\rd}{\rd\gamma}\hwphi(\BZ)=\frac{1}{2}\; {\rm tr}\;
\tJ^{\rm w}_{\phi}(\BZ)-\frac{1}{2}\bbE \biggl[\phi\;
\frac
{\Delta f_Z(\BZ)}{f_Z(\BZ)} \biggr]+\frac{\mR(\gam)}{2}. \label{99}
\end{equation}
Here
\begin{equation}
{\mR}(\gamma)=\bbE \bigl[\nabla\vp\;\log f_\BZ(\BZ ) \bigl(\nabla
\log f_\BZ(\BZ) \bigr)^{\rT} \bigr].
\end{equation}
If we assume that $\vp\equiv1$, then the equality (\ref{99}) directly
implies (\ref{97}). Hence, the standard entropy power is a concave
function of $\gamma$.
\end{theorem}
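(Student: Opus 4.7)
The plan is to mimic the standard derivation of De Bruijn's identity while carefully tracking the extra terms generated by the weight $\vp$. The crucial input is that the PDF $f_\BZ$ of $\BZ=\BX+\sqrt{\gamma}\BN$ satisfies the heat equation $\partial f_\BZ/\partial\gamma=\frac{1}{2}\Delta f_\BZ$. Differentiating $\hwphi(\BZ)=-\int\vp\, f_\BZ\log f_\BZ\,\rd\mathbf y$ under the integral sign (which is legitimate under \eqref{98a}) gives
$$\frac{\rd}{\rd\gamma}\hwphi(\BZ)=-\int\vp(\log f_\BZ+1)\,\partial_\gamma f_\BZ\,\rd\mathbf y=-\frac{1}{2}\int\vp(\log f_\BZ+1)\,\Delta f_\BZ\,\rd\mathbf y.$$

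I would then split this into two parts. The ``$+1$'' contribution equals $-\frac{1}{2}\int\vp\,\Delta f_\BZ\,\rd\mathbf y=-\frac{1}{2}\bbE[\vp\,\Delta f_\BZ(\BZ)/f_\BZ(\BZ)]$, which is already the middle summand of \eqref{99}. For the remaining piece I would apply Green's identity on the ball $\bV_r$ and pass to $r\to\infty$; the surface integral over $\bS_r$ vanishes by \eqref{98b}, leaving
$$-\frac{1}{2}\int\vp\log f_\BZ\,\Delta f_\BZ\,\rd\mathbf y=\frac{1}{2}\int\nabla\bigl(\vp\log f_\BZ\bigr)\cdot\nabla f_\BZ\,\rd\mathbf y.$$
Expanding $\nabla(\vp\log f_\BZ)=\vp\nabla f_\BZ/f_\BZ+(\log f_\BZ)\nabla\vp$ splits this integral into
$$\frac{1}{2}\int\vp\,\frac{|\nabla f_\BZ|^2}{f_\BZ}\,\rd\mathbf y+\frac{1}{2}\int(\log f_\BZ)\nabla\vp\cdot\nabla f_\BZ\,\rd\mathbf y.$$
The first summand equals $\frac{1}{2}\,\mathrm{tr}\,\tJ^\rw_\vp(\BZ)$, since the score vector of the location family is $\nabla\log f_\BZ$ and hence $\bbE[\vp\,|\nabla\log f_\BZ|^2]=\mathrm{tr}\,\tJ^\rw_\vp(\BZ)$. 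The second, rewritten as the expectation $\frac{1}{2}\bbE[(\log f_\BZ)\nabla\vp\cdot\nabla\log f_\BZ]$, is exactly $\mR(\gamma)/2$. Summing the three contributions yields \eqref{99}.

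The main technical obstacle is rigorously justifying the exchange of $\rd/\rd\gamma$ with the spatial integral and the vanishing of the surface integral in Green's identity; these are precisely what hypotheses \eqref{98a} and \eqref{98b} are designed to secure, so a standard dominated-convergence and limiting argument will suffice. The corollary claim is then immediate: setting $\vp\equiv1$ makes $\mR(\gamma)$ vanish since $\nabla\vp=0$, while $\bbE[\Delta f_\BZ/f_\BZ]=\int\Delta f_\BZ\,\rd\mathbf y=0$ by the divergence theorem applied under the same decay conditions, so $\mathrm{tr}\,\tJ^\rw_1(\BZ)=J(\BZ)$ and \eqref{99} reduces to the classical De Bruijn identity; combined with \eqref{deriv} this yields \eqref{97} and hence concavity of the entropy power.
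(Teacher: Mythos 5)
Your proof is correct: the heat-equation identity $\partial_\gamma f_\BZ=\frac12\Delta f_\BZ$, differentiation under the integral, and Green's identity on $\bV_r$ (with the surface term killed by the second displayed hypothesis) produce exactly the three terms $\frac12\,{\rm tr}\,\tJ^{\rm w}_\vp(\BZ)$, $-\frac12\bbE[\vp\,\Delta f_\BZ/f_\BZ]$ and $\mR(\gamma)/2$, and the reduction to the classical De Bruijn identity and concavity statement for $\vp\equiv1$ is also right. The paper states this theorem without proof (deferring to the cited originals), and the hypotheses are plainly tailored to precisely the argument you give, so yours is the intended route.
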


Next, we establish
the concavity of the WEP when the WF is close to a constant.
\begin{theorem}
Assume conditions (\ref{98a}) and (\ref{98b}) and
suppose that $\forall\gamma\in(0,1)$
\begin{equation}
\label{step1}\frac{\rd}{\rd\gamma}\frac{d}{{\rm
tr}\;
J(\BZ)}\geq 1+\epsilon.
\end{equation}
Then $\exists\delta=\delta(\epsilon)$ such that any
WF $\phi$ for which $\exists
\bar\phi >0$:
$|\phi-\bar\phi |<\delta$, $|\nabla\;\phi|<\delta$ the WEP (\ref{94})
is a concave function of $\gamma$. Under the milder assumption
\begin{equation}
\label{step2}\frac{\rd}{\rd\gamma}\frac{d}{{\rm
tr}\;
J(\BZ)} \bigg\vert_{\gamma=0}\geq1+
\xch{\epsilon,}{\epsilon}
\end{equation}
the WEP is a concave function of $\gamma$ in a small neighbourhood of
$\gamma=0$.
\end{theorem}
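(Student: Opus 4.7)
The plan is a perturbation argument centered on the constant-weight case. When $\phi \equiv \bar\phi$, the weighted De Bruijn identity (\ref{99}) collapses to the classical relation $\frac{\rd}{\rd\gamma}h(\BZ) = \frac{1}{2}\,\mathrm{tr}\,J(\BZ)$, because $\nabla\bar\phi = 0$ kills $\mathcal{R}(\gamma)$ and the expectation of $\Delta f_\BZ/f_\BZ$ vanishes by integration by parts under (\ref{98b}). Hence the auxiliary quantity
\begin{equation*}
\Lambda_0(\gamma) := \frac{2}{d}\frac{\rd}{\rd\gamma}\frac{h^{\rw}_{\bar\phi}(\BZ)}{\bar\phi} = \frac{\mathrm{tr}\,J(\BZ)}{d}
\end{equation*}
is exactly the object featuring in (\ref{deriv}), and by the strong hypothesis (\ref{step1}) satisfies $\frac{\rd}{\rd\gamma}\Lambda_0^{-1} \geq 1+\epsilon$. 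The goal is to show that when $|\phi-\bar\phi|<\delta$ and $|\nabla\phi|<\delta$, the true $\Lambda(\gamma)$ differs from $\Lambda_0(\gamma)$ by $O(\delta)$ in a $C^1$ sense, so that the concavity criterion (\ref{97}) holds with positive slack for $\delta = \delta(\epsilon)$ small enough.

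Concretely, write $\Lambda(\gamma) = \Lambda_0(\gamma) + \mathcal{E}(\gamma)$. Using (\ref{99}) the numerator of $\mathcal{E}$ splits into three correction pieces,
\begin{equation*}
\tfrac{1}{2}\mathrm{tr}\bigl(\tJ^{\rw}_\phi(\BZ) - \bar\phi\,J(\BZ)\bigr), \qquad -\tfrac{1}{2}\bbE\Bigl[(\phi-\bar\phi)\tfrac{\Delta f_\BZ(\BZ)}{f_\BZ(\BZ)}\Bigr], \qquad \tfrac{1}{2}\mathcal{R}(\gamma),
\end{equation*}
each manifestly $O(\delta)$: the first equals $\tfrac{1}{2}\bbE[(\phi-\bar\phi)\|\nabla\log f_\BZ\|^2]$ by the definition (\ref{50}) of $\tJ^{\rw}_\phi$ in the spatial score; the second is directly controlled by $|\phi-\bar\phi|<\delta$; the third by the defining formula $\mathcal{R}(\gamma) = \bbE[\nabla\phi\,\log f_\BZ(\nabla\log f_\BZ)^{\rT}]$. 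Combined with the expansion $\bbE[\phi(\BZ)] = \bar\phi + O(\delta)$ of the denominator, one obtains $\mathcal{E}(\gamma) = O(\delta)$ uniformly in $\gamma$, with the implicit constant depending only on $f_\BX$ and on Fisher-type moments of $\BZ$. The main obstacle, and the real technical content of the argument, is to propagate this smallness one more derivative, i.e.\ to establish $\mathcal{E}'(\gamma) = O(\delta)$; this requires differentiating each of the three correction terms in $\gamma$, equivalently interchanging $\rd/\rd\gamma$ with the expectations of the quantities involving $f_\BZ$, $\nabla\log f_\BZ$ and $\Delta f_\BZ/f_\BZ$, justified under the integrability hypotheses (\ref{98a})--(\ref{98b}) already in force.

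Once $\mathcal{E},\mathcal{E}' = O(\delta)$ are in hand, write
\begin{equation*}
\frac{\rd}{\rd\gamma}\Lambda^{-1} - \frac{\rd}{\rd\gamma}\Lambda_0^{-1} = -\frac{\Lambda'-\Lambda_0'}{\Lambda^2} + \Lambda_0'\,\frac{\Lambda^2-\Lambda_0^2}{\Lambda_0^2\Lambda^2};
\end{equation*}
since $\Lambda_0 = \mathrm{tr}\,J(\BZ)/d$ is bounded away from $0$, the right-hand side is $O(\delta)$, and so $\frac{\rd}{\rd\gamma}\Lambda^{-1} \geq (1+\epsilon) - C\delta \geq 1$ upon fixing $\delta(\epsilon) := \epsilon/(2C)$; by (\ref{97}) this is the concavity of $\mathrm{N}^{\rw}_\phi$. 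For the local statement under the milder hypothesis (\ref{step2}), continuity of $\gamma \mapsto \frac{\rd}{\rd\gamma}(d/\mathrm{tr}\,J(\BZ))$ extends the strict inequality $\geq 1+\epsilon/2$ from $\gamma=0$ to an open neighbourhood, on which the same perturbation argument runs without change and delivers concavity of the WEP.
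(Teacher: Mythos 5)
Your proposal is correct and follows essentially the same route as the paper: a perturbation around the constant weight $\bar\phi$, using the weighted De Bruijn identity (\ref{99}) to show that $\varLambda(\gamma)^{-1}$ agrees with $d/\mathrm{tr}\,J(\BZ)$ up to $O(\delta)$ corrections (the trace difference, the Laplacian term handled via the Stokes formula, and $\mR(\gamma)$ controlled by $|\nabla\phi|<\delta$ and (\ref{98b})), then absorbing the error into the $\epsilon$ slack of (\ref{step1}), with the local statement near $\gamma=0$ following by continuity. If anything, you are more explicit than the paper about the one genuinely delicate step --- propagating the $O(\delta)$ smallness to the $\gamma$-derivative of the error term --- which the published argument passes over silently.
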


\begin{proof} It is sufficient to check that
\begin{equation}
\frac{\rd}{\rd\gamma}\psi(\gamma )\geq1
\quad \mathrm{where}\ \psi(\gamma)=\biggl(\frac{2}{d}\frac{\rd}{\rd\gamma}\frac{h^\rw_{\phi}(\BZ)}{\bbE[\phi(\BZ)}]\biggr)^{-1}=\varLambda(\gamma)^{-1}.
\end{equation}
By a straightforward calculation
\begin{align}
\psi(\BZ)&=d\bigl({\bbE}\bigl[\phi(\BZ)\bigr]\bigr)^2 \biggl[
\frac{{\rm d}}{{\rm d}\gamma}h^\rw_{\phi}(\mathbf Z){\bbE}\bigl[\phi(\BZ)
\bigr]-h^\rw_{\phi}(\BZ)\frac{{\rm d}}{{\rm d}\gamma}{\bbE}\phi(\BZ)
\xch{\biggr]^{-1},}{\biggr]^{-1}.}
\nonumber
\\
\frac{{\rm d}}{{\rm d}\gamma}h^\rw_{\phi}(\BZ)&=\frac{1}{2}{\rm
tr}J^w_{\phi}(\BY)-\frac{1}{2}\frac{{\rm d}}{{\rm d}\gamma}{\bbE
}\bigl[\phi(\BY)\bigr]+\frac{1}{2}\mR(\gamma).
\end{align}
These formulas imply
\begin{align}
&\psi(\gamma)=\frac{d}{{\rm tr}\;J_{\phi}(\BZ)}+o(\delta).\nonumber\\
&{\rm as}\;\;1-\delta< {\bbE}\bigl[\phi(\BZ)\bigr]<1+\delta,
\quad
\bigl\llvert{\rm tr}\;J^\rw_{\phi}(\BZ)-{\rm tr}\;J(\BZ)\bigr\rrvert<\delta\;{\rm tr}\;J(\BZ).\label{100}
\end{align}
Next,
\begin{equation}
\frac{\rm d}{{\rm d}\gamma}{\bbE} \bigl[\phi(\BZ) \bigr]=\frac
{1}{2}\int\phi (y)
\Delta f_\BZ(\mathbf y)\rd\mathbf y
\end{equation}
and using the Stokes formula one can bound this term by $\delta$.
Finally, $|\mR(\gamma)|\leq\delta$ in view of (\ref{98b}), which leads
to the claimed result.
\end{proof}

\section{Rates of weighted entropy and information}
\label{Rates}

This section follows \cite{SS}.
The concept of a {\it rate} of the WE or WDE emerges when we work with outcomes
in a context of a discrete-time random process (RP):
\begin{equation}
\label{eq100}h^\rw_{\phi_n} (\bp_n)=-\bbE
\phi_n \bigl(\mathbf X _0^{n-1} \bigr)\log\bp
_n \bigl(\mathbf X_0^{n-1} \bigr):= \bbE
I^\rw_{\phi_n} \bigl(\mathbf X_0^{n-1}
\bigr).
\end{equation}
Here the WF $\phi_n$ is made
dependent on $n$: two immediate cases are where (a) $\phi_n(\mathbf x_1^n)
=\sum_{j=0}^n\psi(x_j)$ and (b) $\phi_n(\mathbf x_1^n)=\prod_{j=0}^n\psi(x_j)$ (an additive and multiplicative
WF, respectively). Next, $\mathbf X_0^{n-1}
=(X_0,\ldots,X_{n-1})$ is a random {\it string} generated by an RP.
For simplicity, let us focus
on RPs taking values in a finite set $\cX$. Symbol $\bbP$ stands for
the probability measure of $\mathbf X$, and $\bbE$ denotes the expectation
under $\bbP$.
For an RP with IID values, the
joint probability of a sample $\mathbf x_0^{n-1}=(x_0,\ldots,x_{n-1})$ is
$\bp_n(\mathbf x_0^{n-1})=\prod_{j=0}^{n-1}p(x_j)$,
$p(x)=\bbP(X_j=x)$ being the probability of an individual outcome
$x\in
\cX$. In the case of a Markov chain,
$\bp_n(\mathbf x_0^{n-1})=\lam(x_0)\prod_{j=1}^n
p(x_{j-1},x_j)$. Here $\lam(x)$ gives an initial distribution and
$p(x,y)$ is the transition probability on
$\cX$; to reflect this fact, we will sometimes use the notation $h^\rw
_{\phi_n} (\bp_n,\lam)$.
The quantity
\[
I^\rw_{\phi_n} \bigl(\mathbf x_0^{n-1}
\bigr):=-\phi_n \bigl(\mathbf x_0^{n-1} \bigr)
\log \bp_n \bigl(\mathbf x_0^{n-1} \bigr)
\]
is interpreted as a {\it weighted information}
(WI) contained in/conveyed by outcome $\mathbf x_0^{n-1}$.

In the IID case, the WI and WE admit the following representations. Define
$S(p)=-\bbE [\log p(X) ]$ and $H^\rw_\psi=-\bbE [\psi
(X)\log
p(X) ]$ to be the SE
and the WE, of the one-digit distribution (the capital letter is used
to make it distinct from $h^\rw_{\phi_n}$,
the multi-time WE).

(A) For an additive WF:
\begin{equation}
\label{eq101}I^\rw_{\phi_n} \bigl(\mathbf
x_0^{n-1} \bigr)=-\sum_{j=0}^{n-1}
\psi(x_j)\sum_{l=0}^{n-1}\log
\xch{p(x_l)}{p(x_l),}
\end{equation}
and
\begin{equation}
\label{eq102}h^\rw_{\phi_n}(\bp_n)=n(n-1)S(p)\bbE
\bigl[\psi (X) \bigr]+nH^\rw _\psi(p):=n(n-1)
\rA_0+n\rA_1.
\end{equation}

(B) For a multiplicative WF:
\begin{equation}
\label{eq103}I^\rw_{\phi_n} \bigl(\mathbf
x_0^{n-1} \bigr)=-\prod_{j=0}^{n-1}
\psi(x_j) \sum_{l=0}^{n-1}\log
\xch{p(x_l)}{p(x_l);}
\end{equation}
and
\begin{equation}
\label{eq104}h^\rw_{\phi_n}(\bp_n)=nH^\rw_\psi
(p) \bigl[\bbE\phi (X) \bigr]^{n-1}:=\rB_0^{n-1}
\times n\xch{\rB_1.}{\rB_1,}
\end{equation}
The values $\rA_0$, $\rB_0$ and their analogs in a general situation
are referred to as {\it primary} rates, and $\rA_1$, $\rB_1$ as {\it
secondary} rates.

\renewcommand{\thesubsection}{\thesection.\Alph{subsection}}
\subsection{WI and WE rates for asymptotically additive WFs}

Here we will deal with a stationary RP
$\mathbf X=(X_j,j\in\bbZ)$ and use the above
notation $\bp_n(\mathbf x_0^{n-1})=\bbP(\mathbf X_0^{n-1}=\mathbf
x_0^{n-1})$ for the
joint probability.
We will refer to
the limit present in
the Shannon--McMillan--Breiman (SMB) theorem (see, e.g., \cite{CT,KS1}) taking place for an ergodic RP:
\begin{equation}
\label{eq:SMBGen}\lim_{n\to\infty} \biggl[-\frac
{1}{n}\log \bp_n
\bigl(\BX_0^{n-1} \bigr) \biggr] =-\bbE\log\bbP
\bigl(X_0|\BX_{-\infty}^{-1} \bigr):=S,\quad \bbP
\hbox{-a.s.}
\end{equation}
Here $\bbP(y|\mathbf x_{-\infty}^{-1})$ is the conditional PM/DF for $X_0=y$
given $\mathbf x_{-\infty}^{-1}$,
an infinite past realization of $\BX$. An assumption upon WFs $\phi_n$
called {\it asymptotic additivity} (AA) is that
\begin{equation}
\label{eq:genAdd}\lim_{n\to\infty}\frac
{1}{n}\phi _n \bigl(\BX
_0^{n-1} \bigr)=\alpha,\quad \bbP\hbox{-a.s.}\; \hbox{and/or in\ $\rL_2(\bbP)$}.
\end{equation}
Eqns \eqref{eq:SMBGen}, \eqref{eq:genAdd} lead to the identification of
the primary rate:
$\rA_0=\alpha S$.

\begin{theorem}\label{thm10.1}
Given an ergodic
RP $\mathbf X$, consider
the WI $I^\rw_{\phi_n}(\BX_0^{n-1})$ and the WE $H^\rw_{\phi
_n}(\bp_n)$
as defined in \eqref{eq101}, \eqref{eq102}. Suppose that convergence
in \eqref{eq:genAdd} holds $\bbP$-a.s. Then:

{\rm{(I)}} We have that
\begin{equation}
\label{eq110}\lim_{n\to\infty} \frac{I^\rw_{\phi_n}(\BX_0^{n-1})}{n^2}=\alpha S,\quad \bbP\hbox{-a.s.}\vadjust{\eject}
\end{equation}

{\rm{(II)}} Furthermore,
\begin{enumerate}
\item[{\rm{(a)}}] suppose that the
WFs $\phi_n$ exhibit convergence \eqref{eq:genAdd}, $\bbP$-a.s.,
with a finite $\alpha$, and $ |\phi_n(\BX_0^{n-1})/n |\leq c$
where $c$ is a
constant independent of $n$. Suppose also that convergence in Eqn
\eqref{eq:SMBGen}
holds true. Then we have that
\begin{equation}
\label{eq111}\lim_{n\to\infty}\frac{h^\rw_{\phi
_n}(\bp
_n)}{n^2}=\alpha S.
\end{equation}
\item[{\rm{(b)}}] Likewise, convergence in Eqn \eqref{eq111} holds true whenever
convergences \eqref{eq:genAdd} and \eqref{eq:SMBGen} hold\ $\bbP
$-a.s. and
$|\log\bp_n(\BX_0^{n-1})/n|\leq c$ where $c$ is a constant.

\item[{\rm{(c)}}] Finally, suppose that convergence in \eqref{eq:SMBGen} and
\eqref{eq:genAdd} holds in $\rL_2 (\bbP)$, with finite $\alpha$ and
$S$. Then again,
convergence in \eqref{eq111} holds true.
\end{enumerate}
\end{theorem}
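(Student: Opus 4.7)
The overall strategy is to factorize the weighted information as a product of two terms, one governed by the asymptotic additivity hypothesis \eqref{eq:genAdd} and the other by the Shannon--McMillan--Breiman (SMB) theorem \eqref{eq:SMBGen}. Part~(I) is then immediate, while Part~(II) requires upgrading almost sure convergence to convergence in $L^1(\bbP)$ so that the limit may be passed through the expectation $h^\rw_{\phi_n}(\bp_n)/n^2=\bbE[I^\rw_{\phi_n}(\BX_0^{n-1})/n^2]$. The main obstacle in (II) is that the natural pointwise dominator is itself $n$-dependent, so ordinary dominated convergence fails; the remedy will be Pratt's generalized dominated convergence lemma, combined with Scheff\'e in case~(a) and a Cauchy--Schwarz product bound in case~(c).

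For Part~(I), I would write
\begin{equation*}
\frac{I^\rw_{\phi_n}(\BX_0^{n-1})}{n^2}=\frac{\phi_n(\BX_0^{n-1})}{n}\cdot\left[-\frac{1}{n}\log\bp_n\bigl(\BX_0^{n-1}\bigr)\right].
\end{equation*}
The first factor tends $\bbP$-a.s.\ to $\alpha$ by \eqref{eq:genAdd} and the second tends $\bbP$-a.s.\ to $S$ by \eqref{eq:SMBGen}; hence the product converges a.s.\ to $\alpha S$, establishing \eqref{eq110}.

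For Part~(II)(a), set $U_n:=\phi_n(\BX_0^{n-1})/n$ and $Z_n:=-\log\bp_n(\BX_0^{n-1})/n\geq 0$. The hypothesis $|U_n|\leq c$ yields the pointwise bound $|U_n Z_n|\leq cZ_n$, and since $Z_n\to S$ $\bbP$-a.s.\ while $\bbE Z_n=h(\bp_n)/n\to S$ by the classical definition of the Shannon entropy rate, Scheff\'e's lemma upgrades this to $Z_n\to S$ in $L^1(\bbP)$. Pratt's generalized dominated convergence lemma, applied with the $L^1$-convergent dominator $cZ_n$, then upgrades the a.s.\ convergence of $U_n Z_n$ to convergence in $L^1(\bbP)$, yielding \eqref{eq111}.

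Part~(II)(b) proceeds by symmetry, now using $|Z_n|\leq c$ as the scalar bound and invoking Pratt's lemma with the controlling sequence $U_n\to\alpha$; the requisite $L^1$-convergence of $U_n$ is free from Birkhoff's ergodic theorem in the additive case $\phi_n=\sum_j\psi(X_j)$, and follows from the natural integrability of $\psi$ in general. Finally, Part~(II)(c) is handled directly by the Cauchy--Schwarz estimate
\begin{equation*}
\|U_nZ_n-\alpha S\|_1\leq\|U_n\|_2\,\|Z_n-S\|_2+|S|\cdot\|U_n-\alpha\|_2,
\end{equation*}
since $\|U_n\|_2$ remains bounded (being $L^2$-convergent) and both terms on the right vanish under the $L^2$ hypotheses placed on \eqref{eq:SMBGen} and \eqref{eq:genAdd}.
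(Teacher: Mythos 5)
The paper itself contains no proof of Theorem~\ref{thm10.1}: Section~\ref{Rates} explicitly defers to \cite{SS}, so your argument can only be judged on its own merits. Your factorization $I^\rw_{\phi_n}(\BX_0^{n-1})/n^2=U_n\,Z_n$ with $U_n=\phi_n(\BX_0^{n-1})/n$ and $Z_n=-\frac1n\log\bp_n(\BX_0^{n-1})$, combined with \eqref{eq:genAdd} and the SMB theorem, is the natural route and surely the intended one. Part~(I) is immediate and correct. Part~(II)(a) is also sound: $Z_n\geq0$, $Z_n\to S$ $\bbP$-a.s., and $\bbE Z_n=H(\BX_0^{n-1})/n\to S$ for a stationary ergodic finite-alphabet process, so Scheff\'e gives $Z_n\to S$ in $\rL_1(\bbP)$ and Pratt's lemma with dominator $cZ_n$ finishes it (you could even bypass Pratt by writing $U_nZ_n-\alpha S=U_n(Z_n-S)+S(U_n-\alpha)$, bounding the first term by $c\|Z_n-S\|_1$ and handling the second by bounded convergence since $|U_n|\leq c$). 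Part~(II)(c) via Cauchy--Schwarz is airtight.

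The one genuine soft spot is (II)(b). The stated hypotheses there give only $\bbP$-a.s.\ convergence $U_n\to\alpha$ together with $|Z_n|\leq c$; to conclude $\bbE[U_nZ_n]\to\alpha S$ you unavoidably need $\bbE[U_n]\to\alpha$ as well (consider an IID uniform source, where $Z_n$ is literally the constant $\log|\cX|$: then $\bbE[U_nZ_n]=\log|\cX|\cdot\bbE[U_n]$, and a.s.\ convergence of $U_n$ alone does not control its mean). You acknowledge this and invoke Birkhoff's $\rL_1$ ergodic theorem, which does settle the matter when $\phi_n$ is genuinely additive, $\phi_n=\sum_j\psi(x_j)$ with $\psi$ bounded on the finite alphabet --- consistent with the theorem's reference to \eqref{eq101}--\eqref{eq102}. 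But the phrase ``follows from the natural integrability of $\psi$ in general'' is not an argument for a general asymptotically additive WF, where no $\psi$ exists and uniform integrability of $\phi_n(\BX_0^{n-1})/n$ is an additional hypothesis rather than a consequence of \eqref{eq:genAdd}. State explicitly that case (b) uses uniform integrability (equivalently, $\rL_1(\bbP)$-convergence) of $\phi_n(\BX_0^{n-1})/n$ and say where it comes from in the additive setting; with that one sentence made precise, your proof is complete.
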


\begin{example} Clearly, the condition of stationarity cannot be dropped.
Indeed, let $\phi_n(x_0^{n-1})=\alpha n$ be an additive WF
and $\BX$ be a (non-stationary) Gaussian process with covariances $C=\{
C_{\mathit{ij}}, i,j\in{\bf Z}_+^1\}$.
Let $f_n$ be a $n$-dimensional PDF of the vector $(X_1,\ldots,X_n)$. Then
\begin{equation}
h^w_{\phi_n}(f_n)=\frac{\alpha n}{2} \bigl[n
\log(2\pi e)+\log \bigl({\rm det}(C_n) \bigr) \bigr]. \label{10.10}
\end{equation}
Suppose that the eigenvalues $\lambda_1\leq\cdots\leq\lambda_j\leq
\cdots\leq\lambda_n$ of $C_n$
have the order $\lambda_j\approx cj$. Then by Stirling's formula
the second term in \eqref{10.10} dominates and the scaling of
$h^w_{\phi_n}(f_n)$ is $(n^2\log n)^{-1}$ instead of $n^{-2}$ as $n\to
\infty$.
\end{example}

Theorem~\ref{thm10.1}. can be considered as an analog of the SMB theorem for the
primary WE
rate in the case of an AA WF. A specification of the secondary rate
$\rA
_1$ is given in
Theorem~\ref{thm10.3} for an additive WF. The WE rates for multiplicative WFs
are studied in Theorem~\ref{thm10.4} for the case where $\mathbf X$
is a stationary ergodic Markov chain on $\cX$.

\begin{theorem}\label{thm10.3}
Suppose that $\phi_n(\mathbf x_0^{n-1})=\sum_{j=0}^{n-1}\psi
(x_j)$. Let
$\BX$ be a stationary RP with the property that $\forall$ $i\in\bbZ$
there exists the limit
\begin{align}
&\lim_{n\to\infty}\sum_{j\in\bbZ:\,\llvert j+i\rrvert\leq n}\bbE \bigl[
\psi(X_0)\log p^{(n+i+j)}\bigl(X_j|
\BX_{-n-i}^{j-1}\bigr) \bigr]
\nonumber
\\
&\quad =\sum_{j\in\bbZ}\bbE \bigl[\psi(X_0)\log p
\bigl(X_j|\BX_{-\infty}^{j-1}\bigr) \bigr]:=-
\xch{\rA_1}{\rA_1,} \label{eq:rA1id}
\end{align}
and the last series converges absolutely. Then
$\lim_{n\to\infty}{\frac{1}{n}}H^\rw_{\phi_n}(\bp
_n)=\rA_1$.
\end{theorem}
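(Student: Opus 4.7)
My plan is to expand the weighted entropy via the chain rule and the additivity of $\phi_n$, realign each summand by stationarity so that $\psi$ sits at $X_0$, and then recognise the resulting double sum as a Cesaro average to which the hypothesis can be applied.

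First, I would use the chain-rule decomposition $\log\bp_n(\BX_0^{n-1}) = \sum_{j=0}^{n-1}\log p^{(j)}(X_j|\BX_0^{j-1})$ (with $p^{(j)}$ the conditional PM/DF given a length-$j$ history) to obtain
\[
-h^\rw_{\phi_n}(\bp_n) = \sum_{i,j=0}^{n-1}\bbE\bigl[\psi(X_i)\log p^{(j)}(X_j|\BX_0^{j-1})\bigr] = \sum_{i=0}^{n-1}U_i(n),
\]
where, after shifting each summand by $-i$, $U_i(n) := \sum_{j=0}^{n-1}\bbE[\psi(X_0)\log p^{(j)}(X_{j-i}|\BX_{-i}^{j-i-1})]$. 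The substitution $j' = j - i$ will identify $U_i(n)$ with the hypothesis's partial sum $T_{i_\mathrm{th}}(n_\mathrm{th})$ at $n_\mathrm{th} = (n-1)/2$ and $i_\mathrm{th} = i - (n-1)/2$, so $-n^{-1}h^\rw_{\phi_n}(\bp_n)$ becomes the Cesaro average of the $U_i(n)$.

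Second, I would extract the diagonal limits. The hypothesis gives $T_{i_\mathrm{th}}(N)\to -\rA_1$ as $N\to\infty$ for each fixed $i_\mathrm{th}$, hence $U_i(n)\to -\rA_1$ whenever $i-(n-1)/2$ stays bounded (the ``middle'' diagonals $i\approx n/2$). Complementarily, by martingale convergence of conditional densities, $q_k^{(L)} := \bbE[\psi(X_0)\log p^{(L)}(X_k|\BX_{k-L}^{k-1})]\to\beta_k := \bbE[\psi(X_0)\log p(X_k|\BX_{-\infty}^{k-1})]$ as $L\to\infty$, and the absolute summability $\sum_k\beta_k = -\rA_1$ (implicit in the hypothesis, since otherwise the series would diverge --- this forces $\alpha S = 0$) combined with a mixing-type estimate yields $U_i(n)\to -\rA_1$ throughout the bulk regime in which both $i$ and $n-1-i$ tend to infinity.

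The main obstacle will be upgrading this pointwise diagonal convergence to the Cesaro limit $n^{-1}\sum_i U_i(n)\to -\rA_1$. Boundary diagonals (only $O(1)$ many, with $i$ or $n-1-i$ bounded) generally converge to partial sums of $(\beta_k)$ that differ from $-\rA_1$ by tails of the absolutely summable series; their aggregate contribution is $O(1/n)$ and is controlled by $\sum_k|\beta_k|<\infty$. The technical crux will be quantifying the rate of $q_k^{(L)}\to\beta_k$ uniformly in $k$ so that $U_i(n)\to -\rA_1$ holds uniformly on the bulk; this uniformity should follow from the ``$\forall i$'' formulation of the hypothesis together with summability of $(\beta_k)$, possibly supplemented by a mixing bound on the process, after which a standard dominated-convergence argument for Cesaro averages delivers $n^{-1}h^\rw_{\phi_n}(\bp_n)\to\rA_1$.
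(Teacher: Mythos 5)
The paper itself gives no proof of Theorem~\ref{thm10.3} (it defers to the source [18], arXiv:1612.09169), so your argument has to be judged on its own terms. Your opening moves are the natural and surely intended ones: writing $H^\rw_{\phi_n}(\bp_n)=-\sum_{i=0}^{n-1}\bbE[\psi(X_i)\log\bp_n(\BX_0^{n-1})]$, shifting by stationarity so that $\psi$ sits at $X_0$, and expanding $\log\bp_n$ by the chain rule does turn $-\frac1n H^\rw_{\phi_n}(\bp_n)$ into the average $\frac{1}{2N+1}\sum_{|i|\le N}T_i(N)$ of the partial sums appearing in \eqref{eq:rA1id}, and your identification $U_i(n)=T_{i-(n-1)/2}((n-1)/2)$ with $N=(n-1)/2$ is correct for odd $n$ (even $n$ needs a separate, minor patch, since the hypothesis only supplies blocks of odd length $2n+1$).

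The problem is the step you yourself flag as the crux. Hypothesis \eqref{eq:rA1id} gives only columnwise convergence of the triangular array $\{T_i(N)\}$: for each \emph{fixed} $i$, $T_i(N)\to-\rA_1$. What you need is convergence of the row averages $\frac{1}{2N+1}\sum_{i=-N}^{N}T_i(N)$, and columnwise convergence alone does not imply this (take $T_i(N)$ equal to $N$ when $i=N$ and to $0$ otherwise: every column tends to $0$, yet the row averages tend to $1/2$). The extreme entries, e.g. $T_{-N}(N)=\bbE[\psi(X_0)\log\bp_{2N+1}(\BX_0^{2N})]$ where $X_0$ sits at the edge of the block, are not controlled by the hypothesis at all, and neither are the $\varTheta(n)$ ``bulk'' entries with $i-(n-1)/2$ unbounded (say $i\approx n/4$), which your plan must also cover. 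The repairs you propose --- uniformity in $i$ ``following from'' the $\forall i$ formulation, martingale convergence of $q_k^{(L)}$ inside the expectation, and ``a mixing-type estimate'' --- all import assumptions (uniform rates, uniform integrability, mixing/decorrelation) that the theorem does not state; pointwise-in-$i$ convergence never yields uniformity by itself, and your claim that absolute summability forces $\alpha S=0$ likewise presupposes decorrelation. To close the argument you must either add an explicit uniform-convergence or dominating-envelope hypothesis for the terms $\bbE[\psi(X_0)\log p^{(i+j)}(X_j|\BX_{-i}^{j-1})]$, uniform in $i$, or derive one from concrete mixing assumptions on $\BX$; as written, the conclusion does not follow from the stated hypotheses.
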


\subsection{WI and WE rates for asymptotically multiplicative WFs}

The WI rate is given in Theorem~\ref{thm10.3}. Here we use the condition of
asymptotic multiplicativity:
\begin{equation}
\label{eq:genMul} \lim_{n\to\infty} \bigl[\phi_n \bigl(\BX_0^{n-1}
\bigr) \bigr]^{1/n}=\beta, \quad \bbP\hbox{-a.s.}
\end{equation}
\setcounter{theorem}{3}
\begin{theorem}\label{thm10.4}
Given an ergodic RP $\BX$ with a probability distribution $\bbP$,
consider the
WI $I^\rw_{\phi_n}(\mathbf x_0^{n-1})=-\phi_n(\mathbf x_0^{n-1})\log
\bp_n(\mathbf x_0^{n-1})$.
Suppose that convergence
in \eqref{eq:genMul} holds $\bbP$-a.s. Then the following limit holds true:
\[
\lim_{n\to\infty}\frac{1}{n}\log I^\rw_{\phi_n} \bigl(\BX
_0^{n-1} \bigr)=\beta,\quad \bbP\hbox{-a.s.}
\]
\end{theorem}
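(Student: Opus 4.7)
The plan is to exploit the natural decomposition
\[
\tfrac{1}{n}\log I^\rw_{\phi_n}\bigl(\BX_0^{n-1}\bigr)=\tfrac{1}{n}\log\phi_n\bigl(\BX_0^{n-1}\bigr)+\tfrac{1}{n}\log\bigl(-\log\bp_n\bigl(\BX_0^{n-1}\bigr)\bigr),
\]
valid on the $\bbP$-full-measure event where, for $n$ sufficiently large, $\phi_n(\BX_0^{n-1})>0$ and $\bp_n(\BX_0^{n-1})\in(0,1)$. Each summand will then be controlled by one of two independent asymptotic regimes: the assumed multiplicative scaling \eqref{eq:genMul} of the WF, and the Shannon--McMillan--Breiman theorem \eqref{eq:SMBGen} for the ergodic process $\BX$.

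The first step is purely deterministic: taking logarithms in \eqref{eq:genMul} converts the hypothesis $[\phi_n(\BX_0^{n-1})]^{1/n}\to\beta$ into $\frac{1}{n}\log\phi_n(\BX_0^{n-1})\to\log\beta$ $\bbP$-a.s. (assuming $\beta>0$; the degenerate case $\beta=0$ is handled separately by interpreting the limit in the extended line $[-\infty,\infty]$). The second step uses SMB in the form $-\frac{1}{n}\log\bp_n(\BX_0^{n-1})\to S$ $\bbP$-a.s., so that on a full-measure event $-\log\bp_n(\BX_0^{n-1})=nS+o(n)$ as $n\to\infty$. Therefore
\[
\tfrac{1}{n}\log\bigl(-\log\bp_n(\BX_0^{n-1})\bigr)=\tfrac{1}{n}\bigl(\log n+\log S+o(1)\bigr)\to 0 \quad\bbP\text{-a.s.},
\]
so the log-probability contribution is negligible on the logarithmic scale, and adding the two limits yields the claim (with the constant on the right-hand side being $\log\beta$, consistent with the natural scaling suggested by \eqref{eq101}--\eqref{eq104}).

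The only nontrivial bookkeeping concerns boundary/degenerate cases. One must first ensure $\bp_n(\BX_0^{n-1})<1$ and $\phi_n(\BX_0^{n-1})>0$ eventually on a full-measure event, so that the logarithm $\log I^\rw_{\phi_n}$ and the splitting above make sense; the former is automatic whenever $S>0$ (i.e.\ the process is not $\bbP$-a.s.\ constant, in which case $I^\rw_{\phi_n}\equiv0$ and the conclusion is trivial), and the latter follows from $\beta>0$ together with the multiplicative hypothesis. The main subtlety, therefore, is not analytic but careful handling of exceptional values of $\beta$ and $S$; once these are disposed of, the theorem is a direct consequence of combining asymptotic multiplicativity with SMB, with no further estimates required.
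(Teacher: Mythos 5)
The paper prints no proof of Theorem~\ref{thm10.4} at all (Section~\ref{Rates} defers to \cite{SS}), so there is nothing to compare line by line; your argument is the natural one and is essentially correct. The decomposition $\tfrac1n\log I^\rw_{\phi_n}(\BX_0^{n-1})=\tfrac1n\log\phi_n(\BX_0^{n-1})+\tfrac1n\log(-\log\bp_n(\BX_0^{n-1}))$, with the first term handled by \eqref{eq:genMul} and the second shown to vanish via the SMB theorem \eqref{eq:SMBGen}, is exactly the right route, and no further estimates are needed. Two points deserve more care than you give them. First, you are right that, with \eqref{eq:genMul} as printed, the limit is $\log\beta$ rather than $\beta$ (assuming $\beta>0$; for $\beta=0$ the limit is $-\infty$): the displayed right-hand side of the theorem is inconsistent with its hypothesis, in the same way that $\rB_0$ is defined as $\bbE\phi(X)$ in \eqref{eq104} but as $\log\mu$ in Theorem~\ref{thm10.5}. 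This should be stated as a correction to the statement, not tucked into a parenthesis, since as written your proof establishes a different identity from the one claimed. Second, your dichotomy ``either $S>0$ or the process is a.s.\ constant'' is false: there exist nondeterministic ergodic processes with zero entropy rate, and in the genuinely deterministic case $I^\rw_{\phi_n}\equiv0$ makes the conclusion false (the limit is $-\infty$), not trivial, so that case must simply be excluded. The correct repair of the $S=0$ case is different from what you wrote: for a nondeterministic process the sequence $n\mapsto-\log\bp_n(\BX_0^{n-1})$ is nondecreasing and eventually strictly positive along $\bbP$-a.e.\ realization, so for large $n$ one has $\tfrac1n\log c\leq\tfrac1n\log(-\log\bp_n(\BX_0^{n-1}))\leq\tfrac1n\log(n(S+1))$ with a realization-dependent $c>0$, and both bounds tend to $0$ whether or not $S>0$. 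With these two adjustments the proof is complete.
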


\begin{theorem}\label{thm10.5}
Assume that $\phi(\mathbf x_0^{n-1})=\prod_{j=0}^{n-1}\psi
(x_j)$, with
$\psi(x)>0$, $x\in\cX$. Let $\mathbf X$ be a stationary Markov chain with
transition probabilities
$p (x,y)>0$, $x,y\in\cX$. Then, 
for all initial distribution $\lam$,
\begin{equation}
\lim_{n\to\infty}\frac{1}{n}\log h^\rw _{\phi
_n}(
\bp_n,\lam )=\rB_0.
\end{equation}
Here
\begin{equation}
\rB_0 =\log\mu
\end{equation}
and $\mu>0$ is the Perron--Frobenius eigenvalue of the matrix $\ttM
= (\psi(x)p(x,y) )$
coinciding with the norm of $\ttM$.
\end{theorem}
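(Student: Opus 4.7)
I would write the weighted entropy as a path sum, express it as a bilinear form in powers of $\ttM$, and then extract the leading-order asymptotics from the Perron--Frobenius theorem applied to $\ttM$.

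First, since $\bp_n(\mathbf{x}_0^{n-1}) = \lambda(x_0)\prod_{k=1}^{n-1}p(x_{k-1}, x_k)$ and $\phi_n(\mathbf{x}_0^{n-1}) = \prod_{k=0}^{n-1}\psi(x_k)$, the product $\phi_n\bp_n$ factors as
$$\phi_n(\mathbf{x}_0^{n-1})\,\bp_n(\mathbf{x}_0^{n-1}) = \lambda(x_0)\,\psi(x_{n-1})\prod_{k=0}^{n-2}\ttM(x_k, x_{k+1}).$$
Writing $-\log\bp_n = -\log\lambda(x_0) - \sum_{j=1}^{n-1}\log p(x_{j-1}, x_j)$ and isolating the factor $\ttM(x_{j-1}, x_j) = \psi(x_{j-1})p(x_{j-1}, x_j)$ within the product, I obtain
$$h^\rw_{\phi_n}(\bp_n, \lambda) = -\bigl\langle\tilde\lambda,\,\ttM^{n-1}\psi\bigr\rangle + \sum_{j=1}^{n-1}Q_j,$$
where $\tilde\lambda(x) := \lambda(x)\log\lambda(x)$ (with $0\log 0 := 0$), $\psi$ is viewed as a column vector, and
$$Q_j := -\sum_{x, y}(\lambda\ttM^{j-1})(x)\,\psi(x)p(x, y)\log p(x, y)\,(\ttM^{n-1-j}\psi)(y).$$

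Because $\psi > 0$ and $p(x,y) > 0$ on the finite set $\cX$, the matrix $\ttM$ is strictly positive, and the Perron--Frobenius theorem gives $\ttM^k = \mu^k r\ell^T + O(\eta^k\mu^k)$ for some $\eta\in(0,1)$, where $r, \ell > 0$ are the right/left Perron eigenvectors normalized by $\langle\ell, r\rangle = 1$. Hence whenever $\min(j, n-j)\to\infty$,
$$(\lambda\ttM^{j-1})(x) = \mu^{j-1}\langle\lambda, r\rangle\,\ell(x)\bigl(1+o(1)\bigr),\qquad (\ttM^{n-1-j}\psi)(y) = \mu^{n-1-j}\langle\ell, \psi\rangle\,r(y)\bigl(1+o(1)\bigr),$$
so $Q_j = (1+o(1))\,C_\lambda K\mu^{n-2}$ with
$$C_\lambda := \langle\lambda, r\rangle\langle\ell, \psi\rangle > 0,\qquad K := -\sum_{x, y}\ell(x)\psi(x)p(x, y)\log p(x, y)\,r(y).$$
Strict positivity of $K$ follows from $\psi, r, \ell > 0$ together with $-\log p(x,y)\geq 0$, strict for at least one pair (automatic when $|\cX|\geq 2$; the one-point case makes $h^\rw_{\phi_n}\equiv 0$ and is excluded). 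The $O(\sqrt{n})$ boundary indices $j$ for which the Perron--Frobenius asymptotics have not yet taken hold contribute at most $O(\sqrt{n}\,\mu^{n-2})$ by the uniform bound $\|\ttM^k\|_\infty\leq C\mu^k$.

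Summing gives $\sum_{j=1}^{n-1}Q_j = (1+o(1))\,n\,C_\lambda K\mu^{n-2}$, whereas the initial term satisfies $|\langle\tilde\lambda,\ttM^{n-1}\psi\rangle| = O(\mu^{n-1}) = o(n\mu^{n-2})$ by the same bound. Therefore
$$h^\rw_{\phi_n}(\bp_n, \lambda) = (1+o(1))\,n\,C_\lambda K\,\mu^{n-2},$$
and dividing by $n$ and taking $\log$ yields the claimed limit $\log\mu = \rB_0$, independently of the initial distribution $\lambda$. The hard part will be ensuring that the leading constant $C_\lambda K$ does \emph{not} vanish, so that the logarithm is legitimate for all sufficiently large $n$ --- this is precisely where the hypotheses $\psi > 0$ and $p(x, y) > 0$ enter, forcing both Perron eigenvectors to be strictly positive and guaranteeing that $\log p(x, y) < 0$ for at least one pair $(x,y)$; the remaining work is a routine application of Perron--Frobenius estimates.
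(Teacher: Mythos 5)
Your argument is correct and is precisely the intended route: the paper itself omits the proof of this theorem (deferring to [19]), but the appearance of the Perron--Frobenius eigenvalue of $\ttM=(\psi(x)p(x,y))$ in the statement and of the left/right Perron eigenvectors of $\ttM$ and $\ttM^{\rT}$ in the follow-up remark on $\rB_1$ show that the transfer-matrix decomposition you use --- writing $h^\rw_{\phi_n}(\bp_n,\lam)=-\langle\tilde\lambda,\ttM^{n-1}\psi\rangle+\sum_{j}Q_j$ and extracting the factor $\mu^{n-2}$ from each $Q_j$ --- is exactly the mechanism behind the result. The points you single out (strict positivity of $K$ and of $C_\lambda$ for an arbitrary initial distribution, the uniform bound on the $o(n)$ boundary indices, and the degenerate one-point case) are the right ones, and you handle them correctly.
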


The secondary rate $\rB_1$ in this case is identified through the
invariant probabilities $\pi(x)$
of the Markov chain and the Perron--Frobenius eigenvectors of matrices
$\ttM$ and~$\ttM^\rT$.

\begin{example} Consider a stationary sequence $X_{n+1}=\alpha
X_n+Z_{n+1}, n\geq0$, where $Z_{n+1}\sim{}$N$(0,\sigma^2)$
are independent, and $X_0\sim{}$N$(0,c)$, $c=\frac{1}{1-\alpha^2}$. Then
\begin{align}
h^w_{\phi_n}(f_n)&=\frac{1}{2}\bbE \Biggl[
\prod_{j=0}^{n-1}\psi(X_j)
\biggl(X_0^2-2\alpha X_0X_1
\nonumber
\\
&\quad +\bigl(1+\alpha^2\bigr)X_1^2-2\alpha
X_1X_2+\bigl(1+\alpha^2\bigr)X_2^2-
\cdots
\nonumber
\\
&\quad +\bigl(1+\alpha^2\bigr)X_{n-2}^2-2
\alpha X_{n-2}X_{n-1}+X_{n-1}^2-2\log
\biggl(\frac{\sqrt{1-\alpha^2}}{(2\pi)^{n/2}} \biggr)\biggr) \Biggr].
\end{align}
Conditions of Theorem~\ref{thm10.5} may be checked under some restrictions on
the WF $\psi$, see \cite{SS}.
\end{example}

\section*{Acknowledgments}
The article was prepared within the framework of the Academic Fund
Program at
the National Research University Higher School of Economics (HSE) and
supported within the subsidy granted to the HSE
by the Government of the Russian Federation for
the implementation of the Global Competitiveness Programme.


\begin{thebibliography}{99}


\bibitem{CT}
\begin{bbook}
\bauthor{\bsnm{Cover}, \binits{T.}},
\bauthor{\bsnm{Thomas}, \binits{J.}}:
\bbtitle{Elements of Information Theory}.
\bpublisher{John Wiley},
\blocation{New York}
(\byear{2006}).
\bid{mr={2239987}}
\end{bbook}
%
\OrigBibText
Cover T., Thomas J. Elements of Information Theory, John Wiley, New York, 2006
\endOrigBibText
\bptok{structpyb}
\endbibitem

\bibitem{D}
\begin{barticle}
\bauthor{\bsnm{Dembo}, \binits{A.}}:
\batitle{Simple proof of the concavity of the entropy power with respect to added Gaussian noise}.
\bjtitle{IEEE Trans. Inf. Theory}
\bvolume{35}(\bissue{4}),
\bfpage{887}--\blpage{888}
(\byear{1989}).
\bid{doi={10.1109/18.32166}, mr={1013698}}
\end{barticle}
%
\OrigBibText
Dembo A. Simple proof of the concavity of the entropy power with respect to added
Gaussian noise. IEEE Trans. Inform. Theory, Vol 35 (1989), 4, 887-888
\endOrigBibText
\bptok{structpyb}
\endbibitem

\bibitem{DCT}
\begin{barticle}
\bauthor{\bsnm{Dembo}, \binits{A.}},
\bauthor{\bsnm{Cover}, \binits{T.}},
\bauthor{\bsnm{Thomas}, \binits{J.}}:
\batitle{Information theoretic inequalities}.
\bjtitle{IEEE Trans. Inf. Theory}
\bvolume{37},
\bfpage{1501}--\blpage{1518}
(\byear{1991}).
\bid{doi={10.1109/18.104312}, mr={1134291}}
\end{barticle}
%
\OrigBibText
Dembo A., Cover T., Thomas J. Information theoretic inequalities. IEEE
Trans. Inform. Theory, Vol 37 (1991), 1501-1518
\endOrigBibText
\bptok{structpyb}
\endbibitem

\bibitem{FS1}
\begin{barticle}
\bauthor{\bsnm{Frizelle}, \binits{G.}},
\bauthor{\bsnm{Suhov}, \binits{Y.}}:
\batitle{An entropic measurement of queueing behaviour in a class of manufacturing operations}.
\bjtitle{Proc. R. Soc. Lond., Ser A}
\bvolume{457},
\bfpage{1579}--\blpage{1601}
(\byear{2001}).
\bid{doi={10.1098/rspa.2000.0731}}
\end{barticle}
%
\OrigBibText
Frizelle G., Suhov Y. An entropic measurement of queueing behaviour
in a class of manufacturing operations. {\it Proc. Royal
Soc. Lond.}, Ser A (2001), {\bf 457}, 1579--1601.
\endOrigBibText
\bptok{structpyb}
\endbibitem

\bibitem{FS2}
\begin{barticle}
\bauthor{\bsnm{Frizelle}, \binits{G.}},
\bauthor{\bsnm{Suhov}, \binits{Y.}}:
\batitle{The measurement of complexity in production and other commercial systems}.
\bjtitle{Proc. R. Soc. Lond., Ser A}
\bvolume{464},
\bfpage{2649}--\blpage{2668}
(\byear{2008}).
\bid{doi={10.1098/\\rspa.2007.0275}}
\end{barticle}
%
\OrigBibText
Frizelle G., Suhov Y. The measurement of complexity in production and other
commercial systems. {\it Proc. Royal Soc. Lond.}, Ser A (2008), {\bf 464}, 2649--2668.
\endOrigBibText
\bptok{structpyb}
\endbibitem

\bibitem{G}
\begin{barticle}
\bauthor{\bsnm{Guiasu}, \binits{S.}}:
\batitle{Weighted entropy}.
\bjtitle{Rep. Math. Phys.}
\bvolume{2},
\bfpage{165}--\blpage{179}
(\byear{1971}).
\bid{doi={10.1016/0034-4877(71)90002-4}, mr={0289206}}
\end{barticle}
%
\OrigBibText
Guiasu, S. Weighted entropy. Rep. Math. Phys.1971, {\bf 2},
165-179
\endOrigBibText
\bptok{structpyb}
\endbibitem

\bibitem{KS1}
\begin{bbook}
\bauthor{\bsnm{Kelbert}, \binits{M.}},
\bauthor{\bsnm{Suhov}, \binits{Y.}}:
\bbtitle{Information Theory and Coding by Example}.
\bpublisher{Cambridge University Press},
\blocation{Cambridge}
(\byear{2013}).
\bid{doi={10.1017/CBO9781139028448}, mr={3137525}}
\end{bbook}
%
\OrigBibText
Kelbert M, Suhov Y. Information Theory and Coding by Example. Cambridge: Cambridge
University Press, 2013
\endOrigBibText
\bptok{structpyb}
\endbibitem

\bibitem{KS2}
\begin{bchapter}
\bauthor{\bsnm{Kelbert}, \binits{M.}},
\bauthor{\bsnm{Suhov}, \binits{Y.}}:
\bctitle{Continuity of mutual entropy in the limiting signal-to-noise ratio regimes}.
In: \bbtitle{Stochastic Analysis},
pp.~\bfpage{281}--\blpage{299}.
\bpublisher{Springer},
\blocation{Berlin}
(\byear{2010}).
\bid{doi={10.1007/978-3-642-15358-7\_14}, mr={2789089}}
\end{bchapter}
%
\OrigBibText
Kelbert M., Suhov Y. Continuity of mutual entropy in the limiting signal-to-noise ratio
regimes. In: Stochastic Analysis, Springer-Verlag: Berlin (2010), 281-299
\endOrigBibText
\bptok{structpyb}
\endbibitem

\bibitem{KSS}
\begin{botherref}
\oauthor{\bsnm{Kelbert}, \binits{M.}},
\oauthor{\bsnm{Stuhl}, \binits{I.}},
\oauthor{\bsnm{Suhov}, \binits{Y.}}:
Weighted entropy and optimal portfolios for risk-averse Kelly investments.
Aequationes Mathematicae
\textbf{91}
(2017).
in press
\end{botherref}
%
\OrigBibText
Kelbert M., Stuhl I., Suhov Y.
Weighted entropy and optimal portfolios for risk-averse Kelly investments.
Aequationes Mathematicae (2017),{\bf 91}, in press
\endOrigBibText
\bptok{structpyb}
\endbibitem

\bibitem{KSS2}
\begin{botherref}
\oauthor{\bsnm{Kelbert}, \binits{M.}},
\oauthor{\bsnm{Stuhl}, \binits{I.}},
\oauthor{\bsnm{Suhov}, \binits{Y.}}:
Weighted entropy and its use in Computer Science and beyond.
Lecture Notes in Computer Science.
(in press)
\end{botherref}
%
\OrigBibText
Kelbert M., Stuhl I., Suhov Y.
Weighted entropy and its use in Computer Science and beyond,
Lecture Notes in Computer Science (in press)
\endOrigBibText
\bptok{structpyb}
\endbibitem

\bibitem{KB}
\begin{barticle}
\bauthor{\bsnm{Khan}, \binits{J.F.}},
\bauthor{\bsnm{Bhuiyan}, \binits{S.M.}}:
\batitle{Weighted entropy for segmentation evaluation}.
\bjtitle{Opt. Laser Technol.}
\bvolume{57},
\bfpage{236}--\blpage{242}
(\byear{2014}).
\bid{doi={10.1016/j.optlastec.2013.07.012}}
\end{barticle}
%
\OrigBibText
Khan, J.F., Bhuiyan, S.M. Weighted entropy for segmentation evaluation. Opt. Laser Technol. (2014), {\bf 57}, 236-242
\endOrigBibText
\bptok{structpyb}
\endbibitem

\bibitem{LKP}
\begin{barticle}
\bauthor{\bsnm{Lai}, \binits{W.K.}},
\bauthor{\bsnm{Khan}, \binits{I.M.}},
\bauthor{\bsnm{Poh}, \binits{G.S.}}:
\batitle{Weighted entropy-based measure for image segmentation}.
\bjtitle{Proc. Eng.}
\bvolume{41},
\bfpage{1261}--\blpage{1267}
(\byear{2012}).
\bid{doi={10.1016/j.proeng.2012.07.309}}
\end{barticle}
%
\OrigBibText
Lai, W.K., Khan, I.M., Poh, G.S.
Weighted entropy-based measure for image segmentation. Procedia Eng. (2012), {\bf 41}, 1261-1267
\endOrigBibText
\bptok{structpyb}
\endbibitem

\bibitem{L}
\begin{barticle}
\bauthor{\bsnm{Lieb}, \binits{E.}}:
\batitle{Proof of entropy conjecture of Wehrl}.
\bjtitle{Commun. Math. Phys.}
\bvolume{62},
\bfpage{35}--\blpage{41}
(\byear{1978}).
\bid{doi={10.1007/BF01940328}, mr={0506364}}
\end{barticle}
%
\OrigBibText
Lieb E. Proof of entropy conjecture of Wehrl. Commun. Math. Phys. (1978), {\bf 62}, 35-41
\endOrigBibText
\bptok{structpyb}
\endbibitem

\bibitem{NH}
\begin{barticle}
\bauthor{\bsnm{Nawrockia}, \binits{D.N.}},
\bauthor{\bsnm{Harding}, \binits{W.H.}}:
\batitle{State-value weighted entropy as a measure of investment risk}.
\bjtitle{Appl. Econ.}
\bvolume{18},
\bfpage{411}--\blpage{419}
(\byear{1986}).
\bid{doi={10.1080/00036848600000038}}
\end{barticle}
%
\OrigBibText
Nawrockia D.N., Harding W.H. State-value weighted entropy as a measure of investment risk.
Appl. Economics. (1986), {\bf 18}, 411-419
\endOrigBibText
\bptok{structpyb}
\endbibitem

\bibitem{PMPC}
\begin{barticle}
\bauthor{\bsnm{Paksakis}, \binits{C.}},
\bauthor{\bsnm{Mermigas}, \binits{S.}},
\bauthor{\bsnm{Pirourias}, \binits{S.}},
\bauthor{\bsnm{Chondrokoukis}, \binits{G.}}:
\batitle{The role of weighted entropy in security quantification}.
\bjtitle{Int. Journ. Inf. Electron. Eng.}
\bvolume{3}(\bissue{2}),
\bfpage{156}--\blpage{159}
(\byear{2013})
\end{barticle}
%
\OrigBibText
Paksakis C., Mermigas S., Pirourias S., Chondrokoukis G.,
The role of weighted entropy in security quantification,
Int. Journ. of Information and Electronics Engineering (2013), {\bf 3}, 2, 156-159
\endOrigBibText
\bptok{structpyb}
\endbibitem

\bibitem{R}
\begin{barticle}
\bauthor{\bsnm{Rioul}, \binits{O.}}:
\batitle{Information theoretic proofs of entropy power inequality}.
\bjtitle{IEEE Trans. Inf. Theory}
\bvolume{57}(\bissue{1}),
\bfpage{33}--\blpage{55}
(\byear{2011}).
\bid{doi={10.1109/TIT.2010.2090193}, mr={2810269}}
\end{barticle}
%
\OrigBibText
Rioul O., Information theoretic proofs of entropy power inequality,
 IEEE Trans. Inform. Theory, Vol 57 (2011), 1, 33-55
\endOrigBibText
\bptok{structpyb}
\endbibitem

\bibitem{S}
\begin{barticle}
\bauthor{\bsnm{Shockley}, \binits{K.R.}}:
\batitle{Using weighted entropy to rank chemicals in quantitative high throughput screening experiments}.
\bjtitle{J. Biomol. Screen.}
\bvolume{19},
\bfpage{344}--\blpage{353}
(\byear{2014}).
\bid{doi={10.1177/\\1087057113505325}}
\end{barticle}
%
\OrigBibText
Shockley K.R. Using weighted entropy to rank chemicals in quantitative high
throughput screening experiments., J. Biomol. Screen. (2014), {\bf 19}, 344-353
\endOrigBibText
\bptok{structpyb}
\endbibitem

\bibitem{SS}
\begin{botherref}
\oauthor{\bsnm{Suhov}, \binits{Y.}}:
Stuhl I. Weighted information and entropy rates
(2016).
arXiv:\arxivurl{1612.09169v1}
\end{botherref}
%
\OrigBibText
Suhov Y., Stuhl I. Weighted information and entropy rates.
arXiv:1612.09169v1, 2016
\endOrigBibText
\bptok{structpyb}
\endbibitem

\bibitem{SS1}
\begin{botherref}
\oauthor{\bsnm{Suhov}, \binits{Y.}},
\oauthor{\bsnm{Sekeh}, \binits{S.}}:
An extension of the Ky-Fan inequality.
arXiv:\arxivurl{1504.01166}
\end{botherref}
%
\OrigBibText
Suhov Y., Sekeh S. An extension of the Ky-Fan inequality,
arXiv:1504.01166
\endOrigBibText
\bptok{structpyb}
\endbibitem

\bibitem{SSSK}
\begin{barticle}
\bauthor{\bsnm{Suhov}, \binits{Y.}},
\bauthor{\bsnm{Stuhl}, \binits{I.}},
\bauthor{\bsnm{Sekeh}, \binits{S.}},
\bauthor{\bsnm{Kelbert}, \binits{M.}}:
\batitle{Basic inequalities for weighted entropy}.
\bjtitle{Aequ. Math.}
\bvolume{90}(\bissue{4}),
\bfpage{817}--\blpage{848}
(\byear{2016}).
\bid{doi={10.1007/s00010-015-0396-5}, mr={3523101}}
\end{barticle}
%
\OrigBibText
Suhov Y., Stuhl I., Sekeh S., Kelbert M. Basic inequalities for weighted entropy,
Aequationes Mathematicae (2016), {\bf 90}, 4, 817-848
\endOrigBibText
\bptok{structpyb}
\endbibitem

\bibitem{SSK}
\begin{botherref}
\oauthor{\bsnm{Suhov}, \binits{Y.}},
\oauthor{\bsnm{Sekeh}, \binits{S.}},
\oauthor{\bsnm{Kelbert}, \binits{M.}}:
Entropy-power inequality for weighted entropy.
arXiv:\arxivurl{1502.02188}
\end{botherref}
%
\OrigBibText
Suhov Y., Sekeh S., Kelbert M. Entropy-power inequality for weighted entropy,
arXiv:1502.02188
\endOrigBibText
\bptok{structpyb}
\endbibitem

\bibitem{SSS}
\begin{botherref}
\oauthor{\bsnm{Suhov}, \binits{Y.}},
\oauthor{\bsnm{Yasaei Sekeh}, \binits{S.}}:
Stuhl I. Weighted Gaussian entropy and determinant inequalities.
arXiv:\arxivurl{1505.01753v1}
\end{botherref}
%
\OrigBibText
Suhov Y., Yasaei Sekeh S., Stuhl I. Weighted Gaussian entropy and
determinant inequalities. arXiv:1505.01753v1.
\endOrigBibText
\bptok{structpyb}
\endbibitem

\bibitem{T}
\begin{barticle}
\bauthor{\bsnm{Tsui}, \binits{P.-H.}}:
\batitle{Ultrasound detection of scatterer concentration by weighted entropy}.
\bjtitle{Entropy}
\bvolume{17},
\bfpage{6598}--\blpage{6616}
(\byear{2015}).
\bid{doi={10.3390/e17106598}}
\end{barticle}
%
\OrigBibText
Tsui P.-H. Ultrasound detection of scatterer concentration by weighted entropy,
Entropy (2015), {\bf 17}, 6598-6616
\endOrigBibText
\bptok{structpyb}
\endbibitem

\bibitem{VG}
\begin{barticle}
\bauthor{\bsnm{Verd\'{u}}, \binits{S.}},
\bauthor{\bsnm{Guo}, \binits{D.}}:
\batitle{A simple proof of the entropy-power inequality}.
\bjtitle{IEEE Trans. Inf. Theory}
\bvolume{52}(\bissue{5}),
\bfpage{2165}--\blpage{2166}
(\byear{2006}).
\bid{doi={10.1109/TIT.2006.872978}, mr={2234471}}
\end{barticle}
%
\OrigBibText
Verd\'u S., Guo D. A simple proof of the entropy-power inequality. IEEE Transaction
on Information Theory (2006), {\bf 52}, 5, 2165-2166
\endOrigBibText
\bptok{structpyb}
\endbibitem

\bibitem{Vi}
\begin{barticle}
\bauthor{\bsnm{Villani}, \binits{C.}}:
\batitle{A short proof of the ``concavity of entropy power''}.
\bjtitle{IEEE Trans. Inf. Theory}
\bvolume{46},
\bfpage{1695}--\blpage{1696}
(\byear{2000}).
\bid{doi={10.1109/18.850718}, mr={1768665}}
\end{barticle}
%
\OrigBibText
Villani C., A short proof of the ``concavity of entropy power'', IEEE
Trans. Inform. Theory 46 (2000) 1695-1696.
\endOrigBibText
\bptok{structpyb}
\endbibitem

\bibitem{YYPL}
\begin{bchapter}
\bauthor{\bsnm{Yang}, \binits{L.}},
\bauthor{\bsnm{Yang}, \binits{J.}},
\bauthor{\bsnm{Peng}, \binits{N.}},
\bauthor{\bsnm{Ling}, \binits{J.}}:
\bctitle{Weighted information entropy: A method for estimating the complex degree of infrared images' backgrounds}.
In: \beditor{\bsnm{Kamel}, \binits{M.}},
\beditor{\bsnm{Campilho}, \binits{A.}} (eds.)
\bbtitle{Image Analysis and Recognition},
vol.~\bseriesno{3656},
\bpublisher{Springer},
\blocation{Berlin/Heidelberg},
pp.~\bfpage{215}--\blpage{22}
(\byear{2005}).
\bid{doi={10.1007/11559573_27}, doi={10.1007/978-3-642-39094-4}, mr={3157460}}
\end{bchapter}
%
\OrigBibText
Yang, L., Yang, J., Peng, N., Ling, J. Weighted information entropy: A method for estimating the complex
degree of infrared images' backgrounds.
In Image Analysis and Recognition, Kamel, M., Campilho, A., Eds., Springer: Berlin/Heidelberg (2005),
{\bf 3656}, 215--22
\endOrigBibText
\bptok{structpyb}
\endbibitem

\bibitem{Z}
\begin{barticle}
\bauthor{\bsnm{Zamir}, \binits{R.}}:
\batitle{A proof of the Fisher information inequality via a data processing argument}.
\bjtitle{IEEE Trans. Inf. Theory}
\bvolume{44}(\bissue{3}),
\bfpage{1246}--\blpage{1250}
(\byear{1998}).
\bid{doi={10.1109/18.669301}, mr={1616672}}
\end{barticle}
%
\OrigBibText
Zamir R. A proof of the Fisher information inequality via a data processing argument.
IEEE Transaction on Information Theory (1998), {\bf 44}, 3, 1246-1250
\endOrigBibText
\bptok{structpyb}
\endbibitem

\end{thebibliography}
%


%
\end{document}